\documentclass[12pt]{amsart}
\usepackage[a4paper,margin=1in,footskip=0.25in]{geometry}
%%%%%%%%%%%%%%%%%%%%%%%%%%%%%%%%%%%%%%%%%%%%%%%%%%%%%%%%%
%%%%%%%%%%%%%%%%%%%%%%%%%%%%%%%%%%%%%%%%%%%%%%%%%%%%%%%%%
%%%%                                                 %%%%
%%%%                                                 %%%%
%%%%                    paquetes                     %%%%
%%%%                                                 %%%%
%%%%                                                 %%%%
%%%%%%%%%%%%%%%%%%%%%%%%%%%%%%%%%%%%%%%%%%%%%%%%%%%%%%%%%
%%%%%%%%%%%%%%%%%%%%%%%%%%%%%%%%%%%%%%%%%%%%%%%%%%%%%%%%%
\usepackage{amsmath, amssymb, amsthm, amsfonts}
\usepackage{tikz}
\usetikzlibrary{arrows, babel}
\usepackage{tikz-cd}
\usepackage[shortlabels]{enumitem}
\usepackage{hyperref}
\usepackage[capitalise,noabbrev]{cleveref}

%%%%%%%%%%%%%%%%%%%%%%%%%%%%%%%%%%%%%%%%%%%%%%%%%%%%%%%%%
%%%%%%%%%%%%%%%%%%%%%%%%%%%%%%%%%%%%%%%%%%%%%%%%%%%%%%%%%
%%%%                                                 %%%%
%%%%                                                 %%%%
%%%%                    Entornos                     %%%%
%%%%                                                 %%%%
%%%%                                                 %%%%
%%%%%%%%%%%%%%%%%%%%%%%%%%%%%%%%%%%%%%%%%%%%%%%%%%%%%%%%%
%%%%%%%%%%%%%%%%%%%%%%%%%%%%%%%%%%%%%%%%%%%%%%%%%%%%%%%%%
\newtheorem{thm}{Theorem}[section]
\newtheorem{cor}[thm]{Corollary}
\newtheorem{lem}[thm]{Lemma}
\newtheorem{prop}[thm]{Proposition}
\theoremstyle{definition}
\newtheorem{defn}[thm]{Definition}
\theoremstyle{remark}
\newtheorem{rmk}[thm]{Remark}
\newtheorem{exm}[thm]{Example}

%%%%%%%%%%%%%%%%%%%%%%%%%%%%%%%%%%%%%%%%%%%%%%%%%%%%%%%%%
%%%%%%%%%%%%%%%%%%%%%%%%%%%%%%%%%%%%%%%%%%%%%%%%%%%%%%%%%
%%%%                                                 %%%%
%%%%                                                 %%%%
%%%%                     Letras                      %%%%
%%%%                                                 %%%%
%%%%                                                 %%%%
%%%%%%%%%%%%%%%%%%%%%%%%%%%%%%%%%%%%%%%%%%%%%%%%%%%%%%%%%
%%%%%%%%%%%%%%%%%%%%%%%%%%%%%%%%%%%%%%%%%%%%%%%%%%%%%%%%%

%%%%%%%Blod letter

%%%%%%Mathematical letter
\def\RR{\ensuremath{\mathbb{R}}}
\def\QQ{\ensuremath{\mathbb{Q}}}
\def\ZZ{\ensuremath{\mathbb{Z}}}

%%%%%%%%%%%%%%%%%%%%%%%%%%%%%%%%%%%%%%%%%%%%%%%%%%%%%%%%%
%%%%%%%%%%%%%%%%%%%%%%%%%%%%%%%%%%%%%%%%%%%%%%%%%%%%%%%%%
%%%%                                                 %%%%
%%%%                                                 %%%%
%%%%                    Comandos                     %%%%
%%%%                                                 %%%%
%%%%                                                 %%%%
%%%%%%%%%%%%%%%%%%%%%%%%%%%%%%%%%%%%%%%%%%%%%%%%%%%%%%%%%
%%%%%%%%%%%%%%%%%%%%%%%%%%%%%%%%%%%%%%%%%%%%%%%%%%%%%%%%%

\newcommand{\ModelCategoryDefinition}[3]{
\begin{description}
\item[$\overset\sim\to$] #1
\item[$\hookrightarrow$] #2
\item[$\twoheadrightarrow$] #3
\end{description}	
}

\newcommand{\Acy}{\mathrm A}
\newcommand{\PP}{\mathrm P}
\newcommand{\LL}{\mathrm L}
\newcommand{\id}{\mathrm{id}}

%%%%%%Arrows
%%%%%%Operators&Functors
\newcommand{\Fun}{\operatorname{Fun}}

\newcommand{\Hom}{\operatorname{Hom}}
\newcommand{\hocolim}{\operatorname{hocolim}}
%%%%%%Categories

\newcommand{\sSet}{\operatorname{sSet}}
\newcommand{\Arr}{\operatorname{Arr}}
\newcommand{\map}{\operatorname{map}}

\newcommand{\chara}{\operatorname{char}}

\title{Relative plus constructions}

\author{Guille Carri\'on Santiago}

\address{Departament de Matemàtiques, Universitat Aut\`onoma de Barcelona, Barcelona, Spain}
\email{g.carrionsantiago@gmail.com}

\author{J\'er\^ome Scherer}
\address{Mathematics, Ecole Polytechnique F\'ed\'erale de Lausanne, EPFL, Switzerland}
\email{jerome.scherer@epfl.ch}

\subjclass[2020]{55P60, 55N25, 19D06, 20F14}

%55P60  	Localization and completion in homotopy theory
%55N25  	Homology with local coefficients, equivariant cohomology
%19D06  	$Q$- and plus-constructions
%20F14  	Derived series, central series, and generalizations for groups

\keywords{Quillen plus construction, nullification functor, relative plus construction, homology equivalence, acyclic space.} 

%\date{}

\begin{document}
\begin{abstract}
Let $h$ be a connective homology theory. We construct a functorial relative plus construction as a Bousfield localization functor in the category of maps of spaces. It allows us to associate to a pair $(X, H)$, consisting of a connected space $X$ and an $h$-perfect normal subgroup $H$ of the fundamental group $\pi_1(X)$, an $h$-acyclic map $X \rightarrow X^{+h}_H$ inducing the quotient by $H$ on the fundamental group. We show that this map is terminal among the $h$-acyclic maps that kill a subgroup of $H$. When $h$ is an ordinary homology theory with coefficients in a commutative ring with unit $R$, this provides a functorial and well-defined counterpart to a construction by cell attachment introduced by Broto, Levi, and Oliver in the spirit of Quillen's plus construction. We also clarify the necessity to use a strongly $R$-perfect group $H$ in characteristic zero.
\end{abstract}
\maketitle

%%%%%%%%%%%%%%%%%%%%%%%%%%%%%%%%%%%%%%%%%%%%%%%%%%%%%%%%%%%%%%%
%%%%%%%%%%%%%%%%%%%%%%%%%%%%%%%%%%%%%%%%%%%%%%%%%%%%%%%%%%%%%%%
%%%%%               Introduccion                          %%%%%
%%%%%%%%%%%%%%%%%%%%%%%%%%%%%%%%%%%%%%%%%%%%%%%%%%%%%%%%%%%%%%%
%%%%%%%%%%%%%%%%%%%%%%%%%%%%%%%%%%%%%%%%%%%%%%%%%%%%%%%%%%%%%%%
\section{Introduction}

Since Quillen's introduction of the plus-construction in \cite{Quillen1971} as a mean to study the algebraic K-theory groups of a ring, the number of improvements, generalizations, variants, or direct applications has been growing year after year.
Originally Quillen's construction is done by cell attachments. It does not only make sense for $B\mathrm{GL}(R)$, where it gives rise to the K-theory groups $K_n R = \pi_n B\mathrm{GL}(R)^+$ for $n \geq 1$, but for any connected space~$X$. To kill a perfect subgroup of the fundamental group, it suffices to attach cells of dimension $2$; but to do it without changing the homology, one has to attach cells of dimension $3$ to get it right.

One obvious advantage of this construction is that it is economical, the number of cells and their dimension is small, but one drawback is the lack of functoriality. The use of Bousfield localization fixes this problem and relates it to homological localization, as observed by Casacuberta in \cite[Section~6]{MR1290581} and Bousfield in \cite[Subsection~4.3]{MR1481817}. The desire to have a localization functor with respect to ordinary homology with coefficients in a ring $R$ dates back at least to Sullivan, \cite{MR442930}, Hilton-Mislin-Roitberg, \cite{MR0478146}, Adams, \cite{MR0420607}, and Bousfield-Kan, \cite{MR0365573}. The most elegant way to solve this is not the most effective sizewise as it will enjoy a universal property. Bousfield realized, \cite{MR380779}, that inverting formally all homology equivalences between CW-complexes with countably many cells, one actually inverts all homology equivalences. As they form a set, one can form a single map $f$ by taking the wedge of all of them and consider the left Bousfield localization functor $\LL_f = \LL_{HR}$. 

This is a homology localization functor for ordinary homology with coefficients in $R$ and we are interested in the associated nullification functor $\PP_A = \LL_{A \rightarrow *}$ where $A$ is the homotopy cofiber of $f$. 
The space $A$ is acyclic, $\widetilde H_*(A; R) = 0$, being the homotopy cofiber of a homology equivalence. It is a \emph{universal} acyclic space in the sense that $\PP_A X$ is contractible for all acyclic spaces $X$. Therefore $\PP_A$ deserves the name of plus-construction for the homology theory $H_*(-; R)$, we write $\PP_A X = X^{+ R}$.
In principle, one has thus to perform a nullification with respect to a large universal acyclic space. Such a space is not uniquely determined and particularly nice models have been constructed by Berrick and Casacuberta in \cite{Berrick1999} for integral homology. For other coefficients, we recommend Mislin and Peschke's inspiring work \cite{Mislin2001}, see also Tai's \cite{Tai1998}.
Early occurrences of such constructions were called ``partial $R$-completions'' in \cite[VII.6.2]{MR0365573}. 
Functorial constructions are necessary in many applications; let us cite, for example, fiberwise plus constructions as presented in \cite[Section~1.F]{Farjoun1996}.

While a lot of work has been done on (absolute) plus-constructions, \emph{relative} plus-constructions that were present in Quillen's original work have been much less studied, maybe because the functoriality issues are more delicate, even to state them well. By relative, we mean that one does not wish to use all maps from a universal acyclic space, but only those that are necessary to kill a fixed subgroup of the fundamental group. In the recent article \cite{Broto2021}, Broto, Levi, and Oliver needed such a relative plus construction for homology with coefficients in a ring $R$ different from~$\mathbb Z$. A typical application of their techniques is to apply this relative plus construction for mod $p$ homology on $BG$ so as to kill the subgroup $O^p(G)$ of the fundamental group $G$. They find then a chain complex whose homology is the mod $p$ loop space homology $H_*(\Omega(BG)^{\wedge}_p; \mathbb F_p)$, recovering Benson's result in \cite{MR2465835}. The construction in \cite{Broto2021} is done in the spirit of Quillen's original construction, attaching cells in dimension $2$ and $3$ so as to kill a fixed subgroup $H \unlhd \pi_1(X)$ without changing the homology with coefficients in $R$. They prove that such a relative plus construction $X \rightarrow X^{+ R}_H$ exists if and only if $H$ is $R$-perfect when the characteristic of $R$ is different from zero, or if $H$ is \emph{strongly $R$-perfect} when $\chara(R)=0$.

%\medskip

In the present article, we provide complete answers to the following issues. The first one relates to the strongly perfect subgroups and clarifies the reason why the necessary condition for the existence of a partial plus construction seems to be different for different rings. We recall Bousfield's observation about the acyclicity for ordinary homology $H_*(-; R)$ with coefficients in an arbitrary ring $R$, \cite{Bousfield1974}. There is always a ``nice'' abelian group $S$ that we call $h$-characteristic
%, which is either torsion or a subring of $\mathbb Q$ 
such that $X^{+R}$ and $X^{+S}$ coincide. 
%In both cases, 
Relative plus-constructions exist for \emph{all} $S$-perfect subgroups of the fundamental group. 
%(the strongly perfect condition is not necessary). 
In fact, we prove in \cref{prop:perfectkernel} that for a ring $R$ of characteristic zero strongly $R$-perfect groups are exactly the $S$-perfect ones. 

Our second contribution, without surprise, relates to the lack of functoriality of the cellular construction. We observe in \cref{exm:BLO-not-well-defined} that the Broto-Levi-Oliver construction is not even well-defined up to homotopy. We propose here a functorial construction in the category of arrows (maps of spaces) allowing us to choose a subgroup $H \unlhd \pi_1(X)$. This also fixes the question of uniqueness. More precisely, we represent a pair $(X, H)$ consisting of a pointed connected space $X$ and a normal subgroup $H$ of the fundamental group by the associated covering space $\widetilde X \rightarrow X$. We choose a universal $HR$-acyclic space $A$ so that the associated nullification functor $\PP_{A}$ is a functorial (absolute) plus-construction: $\PP_{A} X = X^{+ R}$. We move now to the presheaf category of arrows, or morphisms of spaces, where Bousfield localization techniques are available. We have, in particular, a nullification functor $\PP_{\id_{A}}$ associated to the object $\id_A\colon A \rightarrow A$. 
We describe the nullification with respect to any identity map in terms of well-understood spaces.
%We study the nullification with respect to any identity map, in which case we can describe the source and the target in terms of well-understood spaces.

\medskip

%%%%%%%%%%%%%%%%%%%%%%%%%%%%%%%%%%%%%%%%%%%%%%
\noindent
{\bf \cref{prop:nullification_respect_identity}.}
%%%%%%%%%%%%%%%%%%%%%%%%%%%%%%%%%%%%%%%%%%%%%%
\emph{
Let $W$ be a pointed space, and $f\colon  X_0\to X_1$ be a map between pointed connected spaces. Then $\PP_{\id_W}f\colon  Y_0 \to Y_1$, the $\id_{W}$-nullification of $f$, is the homotopy pushout of $f$ along the $W$-nullification $X_0 \rightarrow \PP_W X_0 = Y_0$, i.e., the following diagram is a homotopy pushout:}
\[
\begin{tikzcd}
X_0 \arrow[d, "f"] \arrow[r] & \PP_W X_0 \arrow[d, "\PP_{\id_W}(f)"] \\
X_1 \arrow[r] & Y_1. 
\end{tikzcd}
\]

\medskip

When $W$ is a universal acyclic space $A$, this gives a new way to define relative plus constructions; see \cref{def:universal_plus_construction}. We prove that the map $X \rightarrow X^{+R}_H$ provided by the bottom arrow in the homotopy pushout $p\to\PP_{\id_A}(p)$ is a homology equivalence with coefficients in $R$ inducing the quotient by $H$ on the fundamental group, see \cref{thm:elementaryproperties}. We also show in \cref{thm:universa_plus_construction_local_coeff} that our functorial relative plus-construction induces a homology isomorphism for arbitrary twisted coefficients, which completes the list of properties a relative plus construction should enjoy following Broto, Levi, and Oliver's \cref{def:BLO+const}.

%\medskip
%
%%%%%%%%%%%%%%%%%%%%%%%%%%%%%%%%%%%%%%%%%%%%%%
%\noindent
%{\bf \cref{thm:universa_plus_construction_local_coeff}.}
%%%%%%%%%%%%%%%%%%%%%%%%%%%%%%%%%%%%%%%%%%%%%%
%\emph{Let $R$ be a commutative ring with unit and $S$ be the associated $HR$-characteristic group. Let $X$ be a connected space and $H\unlhd\pi_1(X)$ an $S$-perfect subgroup of $\pi_1(X)$. The relative plus construction $q\colon X\to X^{+R}_H$ induces an isomorphism in homology $H_*(q;N)\colon H_*(X;N)\to H_*(X^{+R}_H;N)$ for every $R[\pi_1(X)/H]$-module~$N$.
%}
%
%\medskip

Given any connective homology theory $h$, we finally prove in \cref{sec:universal} that our relative plus construction $X \rightarrow X^{+h}_H$, which kills the subgroup $H\unlhd \pi_1(X)$, is terminal among all $h$-acyclic maps that kill a subgroup of $H$ on $\pi_1 (X)$, thereby providing a universal property characterizing our construction among all possible choices.

\medskip

%%%%%%%%%%%%%%%%%%%%%%%%%%%%%%%%%%%%%%%%%%%%%%
\noindent
{\bf \cref{thm:universal property}.}
%%%%%%%%%%%%%%%%%%%%%%%%%%%%%%%%%%%%%%%%%%%%%%
\emph{Let $h$ be a connective homology theory and $S$ be the associated $h$-character\-istic group. Let $X$ be a connected space and $H\unlhd \pi_1(X)$ an $S$-perfect subgroup of $\pi_1(X)$. The relative plus construction $q\colon X\to X^{+h}_H$ is terminal among all $h$-acyclic maps that kill a subgroup of $H$ on $\pi_1 (X)$.
}

\medskip

%We prove that its codomain enjoys all elementary properties of a relative plus construction, justifying thus its name.
%Our second contribution relates to the strongly perfect subgroups and clarifies the reason why the necessary condition for the existence of a partial plus construction seems to be different for different rings. We recall Bousfield's observation about the acyclicity for ordinary homology $H_*(-; R)$ with coefficients in an arbitrary ring $R$, \cite{Bousfield1974}. There is always a ``nice'' abelian group $S$ which is either torsion or a subring of $\mathbb Q$ such that $X^{+R}$ and $X^{+S}$ coincide. In both cases, relative plus-constructions exist for all $S$-perfect subgroups of the fundamental group (the strongly perfect condition is not necessary). In fact, we prove in \cref{prop:perfectkernel} that for a ring $R$ of characteristic zero strongly $R$-perfect groups are exactly the $S$-perfect ones. 
%\medskip

\noindent
{\bf Acknowledgments.} This project started during the first author's stay at EPFL in the fall semester of 2022. He would like to thank the Mathematics Institute in Lausanne for its hospitality and the Spanish Agencia Estatal de Investigación for making this possible thanks to the project PID2020-116481GB-I00 and Spanish Ministerio de Ciencia e Innovación grant BES-2017-079851. It is a pleasure to thank Victor Torres for helpful discussions during that semester, Nat\`alia Castellana for many valuable suggestions and useful comments, and the referee without whom we would have forgotten to write the essential \cref{sec:universal}.

%%%%%%%%%%%%%%%%%%%%%%%%%%%%%%%%%%%%%%%%%%%%%%%%%%%%%%%%%%%%%%%
%%%%%%%%%%%%%%%%%%%%%%%%%%%%%%%%%%%%%%%%%%%%%%%%%%%%%%%%%%%%%%%
%%%%%               Preliminares                          %%%%%
%%%%%%%%%%%%%%%%%%%%%%%%%%%%%%%%%%%%%%%%%%%%%%%%%%%%%%%%%%%%%%%
%%%%%%%%%%%%%%%%%%%%%%%%%%%%%%%%%%%%%%%%%%%%%%%%%%%%%%%%%%%%%%%
\section{Preliminaries}
This section summarizes the results we need to construct a nullification functor that coincides with a relative plus construction. In~\ref{sec:model_cat_structure}, we recall some properties of the simplicial model structure on the category of arrows of spaces, and in~\ref{sec:nul_funct}, we describe the nullification functor in the category of arrows.

%%%%%%%%%%%%%%%%%%%%%%%%%%%%%%%%%%%%%%%%%%%%%%%%
\subsection{Model category structure}\label{sec:model_cat_structure}
%%%%%%%%%%%%%%%%%%%%%%%%%%%%%%%%%%%%%%%%%%%%%%%%
Let $\sSet_*$ be the category of pointed spaces. The \emph{arrow category} of pointed spaces, which we denote by $\Arr(\sSet_*)$, is the category whose objects are morphisms of pointed spaces and given two objects $f\colon X_0\to X_1$ and \mbox{$g\colon Y_0\to Y_1$}, a morphism between them $\alpha\colon f\to g$ is a pair of morphisms of pointed spaces, \mbox{$\alpha_0\colon X_0\to Y_0$} and $\alpha_1\colon X_1\to Y_1$ such that the following diagram commutes:
\[
\begin{tikzcd}
X_0 \arrow[d, "f"] \arrow[r, "\alpha_0"] & Y_0 \arrow[d, "g"] \\
X_1 \arrow[r, "\alpha_1"]                & Y_1.
\end{tikzcd}
\]

Since the arrow category of pointed spaces, $\Arr(\sSet_*)$, can be identified with the category of functors $\Fun(0\to 1, \sSet_*)$, following Hirschhorn \cite[Section 9.1, Section 11.7]{Hirschhorn2003}, $\Arr(\sSet_*)$ admits a simplicial model category structure induced by the Quillen model category structure in $\sSet_*$, see \cite[7.10.13]{Hirschhorn2003}, or \cite[Section 6]{Balchin2021}.

%%%%%%%%%%%%%%%%%%%%%%%%%%%%%%%%%%%%%%%%%%%%%%%%
\begin{thm}\label{thm:model_category_Arr(sSet)}
%%%%%%%%%%%%%%%%%%%%%%%%%%%%%%%%%%%%%%%%%%%%%%%%
There is a simplicial model category structure in the category of arrows $\Arr(\sSet_*)$ in which a map $\alpha\colon (X_0\overset f\to X_1) \to (Y_0\overset g\to Y_1)$ is defined to be a:
\ModelCategoryDefinition
{weak equivalence if both $\alpha_0$, $\alpha_1$ are weak equivalences in $\sSet_*$;
}%Weak equivalence
{a cofibration if $\alpha_0\colon X_0\to Y_0$ and $\alpha_1\colon X_1\to Y_1$ are injective; and}%cofibration
{a fibration if $\alpha_1\colon X_1\to Y_1$ and the natural map $X_0\to X_1\times_{Y_1}Y_0$ are unpointed Kan fibrations, this is, both maps have he right lifting property with respect to the map $\Lambda[n,k]\to \Delta[n]$ for every $n>0$ and every $0\le k\le n$.}%fibration
The simplicial structure, denoted by $\otimes$, is defined as follows: Given an object $f\colon X_0\to X_1$ and a space $K$;
\begin{enumerate}
\item the object $f\otimes K$ is defined by $(f\wedge \id_{K_+})\colon X_0\wedge K_+\to X_1\wedge K_+$; and
\item the object $f^K$ is the induced map on pointed mapping spaces $f_*\colon \map_*(K_+,X_0)\to \map_*(K_+,X_1)$.
\end{enumerate}
\end{thm}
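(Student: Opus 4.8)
The plan is to recognize the stated structure as the Reedy model structure on a diagram category and to read off the latching and matching data explicitly. First I would use the identification $\Arr(\sSet_*) = \Fun(0\to 1, \sSet_*)$ already recorded in the text, and equip the indexing category $[1] = (0\to 1)$ with a Reedy structure. The decisive choice is the orientation: I assign $\deg(1)=0$ and $\deg(0)=1$, so that the unique non-identity arrow \emph{lowers} degree and $[1]$ becomes an inverse category. With this choice in hand, the existence of a simplicial model structure with objectwise weak equivalences is not something to be proved by hand: it is the Reedy model structure on $\Fun([1],\sSet_*)$ provided by Hirschhorn's general machinery \cite{Hirschhorn2003} for diagrams indexed by a Reedy category over a (simplicial) model category. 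In particular the two-out-of-three and retract axioms, the two factorizations, and the simplicial refinement are all delivered by that theorem; what remains is to check that its abstract definitions specialize to the three clauses in the statement.

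Next I would compute the latching and matching objects for $[1]$. Since $[1]$ is an inverse category, all latching categories are empty, the latching objects are the initial pointed object, and the relative latching map of $\alpha\colon f\to g$ at an object $c$ collapses to $\alpha_c\colon X_c\to Y_c$. Hence a Reedy cofibration is precisely a map for which $\alpha_0$ and $\alpha_1$ are cofibrations in $\sSet_*$, that is, both injective, matching the stated description. Dually, the matching object at the degree-zero object $1$ is terminal, so its relative matching map is $\alpha_1\colon X_1\to Y_1$; and the matching object at $0$ is $X_1$ (respectively $Y_1$), so the relative matching map at $0$ is the canonical map $X_0\to X_1\times_{Y_1}Y_0$. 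A Reedy fibration is therefore exactly a map for which $\alpha_1$ and $X_0\to X_1\times_{Y_1}Y_0$ are fibrations in $\sSet_*$. Recalling that fibrations in the Quillen structure on $\sSet_*$ are those maps whose underlying map of simplicial sets is a Kan fibration, equivalently has the right lifting property against every horn inclusion $\Lambda[n,k]\to\Delta[n]$, gives the precise formulation in the statement.

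It remains to identify the simplicial structure. In any functor category the tensor and cotensor of the Reedy structure are computed objectwise, so for $f\colon X_0\to X_1$ and a space $K$ the object $f\otimes K$ is the arrow $(X_0\otimes K)\to(X_1\otimes K)$ and $f^{K}$ is the arrow $X_0^{K}\to X_1^{K}$. Since $\sSet_*$ is tensored by $X\otimes K = X\wedge K_+$ and cotensored by $X^{K}=\map_*(K_+,X)$, these are exactly $f\otimes K = f\wedge\id_{K_+}$ and $f^{K}=f_*\colon \map_*(K_+,X_0)\to\map_*(K_+,X_1)$, as claimed; the pushout-product axiom for this action reduces, via the latching/matching description above, to the corresponding axiom in $\sSet_*$.

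The genuinely delicate point is the orientation of the Reedy structure. Choosing the arrow to \emph{raise} rather than lower degree would produce the other Reedy structure on $\Fun([1],\sSet_*)$, whose cofibrations involve a pushout-product $Y_0\sqcup_{X_0}X_1\to Y_1$ and whose fibrations are merely objectwise; this is \emph{not} the structure in the statement. Once the inverse orientation is fixed, the rest is a bookkeeping exercise matching the abstract latching and matching maps to the concrete maps $\alpha_0$, $\alpha_1$, and $X_0\to X_1\times_{Y_1}Y_0$, and no further obstacle arises. Equivalently, because $[1]$ is an inverse category, this Reedy structure coincides with the injective model structure on $\Fun([1],\sSet_*)$, which exists since $\sSet_*$ is combinatorial; but the Reedy route avoids any appeal to cofibrant generation.
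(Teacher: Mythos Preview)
Your proposal is correct and follows essentially the same route as the paper, which does not give a proof but simply cites Hirschhorn \cite[Section~9.1, Section~11.7, 7.10.13]{Hirschhorn2003} and \cite[Section~6]{Balchin2021} in the paragraph preceding the statement, and later (in Subsection~\ref{sec:nul_funct}) remarks that this structure ``corresponds to a Reedy structure on the poset $0<1$.'' Your contribution is to make explicit what the paper leaves implicit: the choice of the inverse orientation on $[1]$ and the latching/matching computation that identifies the abstract Reedy cofibrations and fibrations with the concrete conditions in the statement; this is exactly the bookkeeping the cited references would require the reader to unwind.
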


With this simplicial structure, given two arrows of pointed spaces $f\colon X_0\to X_1$, \mbox{$g\colon Y_0\to Y_1$}, the \emph{mapping space} from $f$ to $g$ is the (unpointed) space that in degree $n$ is
\[
\map(f,g)_n:=\Hom_{\Arr(\sSet_*)}(f\wedge \id_{\Delta[n]_+},g)
\]
with face and degeneracy maps induced by the standard maps between $\Delta[n]$'s.  Another useful way to understand mapping spaces is to describe them as being built from usual mapping spaces of simplicial sets. The description of morphisms as commuting squares tells us that $\map(f, g)$ is given as a pullback in% of $\map(X_0, Y_0) \xrightarrow{g_*} \map(X_0, Y_1) \xleftarrow{f^*} \map(X_1, Y_0)$.
\[
\begin{tikzcd}
{\map(f,g)} \arrow[d] \arrow[r]  & {\map(X_1,Y_1)} \arrow[d, "f^*"] \\
{\map(X_0,Y_0)} \arrow[r, "g_*"] & {\map(X_0,Y_1)}.                 
\end{tikzcd}
\]
%%%%%%%%%%%%%%%%%%%%%%%%%%%%%%%%%%%%%%%%%%%%%%%%
\begin{cor}
%%%%%%%%%%%%%%%%%%%%%%%%%%%%%%%%%%%%%%%%%%%%%%%%
In this model category structure, an object $f\colon X_0\to X_1$ is:
\begin{itemize}
    \item cofibrant if $X_0$ and $X_1$ are cofibrant pointed simplicial sets; and
    \item fibrant if and only if $f$ is a Kan fibration with $X_1$ a Kan complex.
\end{itemize}
\end{cor}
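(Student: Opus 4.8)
The plan is to specialize the definitions of cofibration and fibration from \cref{thm:model_category_Arr(sSet)} to the canonical maps involving the initial and terminal objects of $\Arr(\sSet_*)$. Because the point $\ast$ is the zero object of $\sSet_*$, both the initial and the terminal object of the arrow category are the identity $\id_\ast\colon \ast \to \ast$. Recall that an object is cofibrant exactly when the unique map out of the initial object is a cofibration, and fibrant exactly when the unique map into the terminal object is a fibration; so everything reduces to reading off what these two conditions say for $f\colon X_0 \to X_1$.

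For the cofibration statement I would apply the componentwise injectivity criterion to the map $(\ast \to \ast) \to f$, whose two components are the basepoint inclusions $\ast \to X_0$ and $\ast \to X_1$. Requiring both to be injective is precisely the condition that $X_0$ and $X_1$ be cofibrant in $\sSet_*$; as basepoint inclusions are always monomorphisms, this holds automatically, and in fact every object of $\Arr(\sSet_*)$ is cofibrant.

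For the fibration statement I would apply the fibration criterion to the map $f \to (\ast \to \ast)$, with components $X_0 \to \ast$ and $X_1 \to \ast$. The first half of the criterion asks that $X_1 \to \ast$ be an unpointed Kan fibration, i.e.\ that $X_1$ be a Kan complex. The second half concerns the natural map $X_0 \to X_1 \times_{\ast} \ast$; the only computation needed is to identify the pullback $X_1 \times_\ast \ast$ with $X_1$, after which this natural map becomes $f$ itself and the condition reads that $f$ be a Kan fibration. Both conditions together being also sufficient yields the claimed equivalence.

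Since each assertion is a direct unwinding of the definitions, I expect no genuine obstacle. The only step that is not entirely formal is recognizing the pullback $X_1 \times_\ast \ast$ as $X_1$, and hence the natural map $X_0 \to X_1 \times_\ast \ast$ as $f$, which is exactly what collapses the second clause of the fibration condition into a single Kan fibration requirement on $f$.
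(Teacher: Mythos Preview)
Your argument is correct and is precisely the direct unwinding of the definitions that the paper intends; in fact the paper states this corollary without proof, treating it as immediate from \cref{thm:model_category_Arr(sSet)}. Your observation that every object of $\Arr(\sSet_*)$ is cofibrant (since basepoint inclusions are always monomorphisms) is a slight strengthening of the one-directional cofibrancy claim, and is correct.
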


%%%%%%%%%%%%%%%%%%%%%%%%%%%%%%%%%%%%%%%%%%%%%%%%
\subsection{Nullification in the arrow category}\label{sec:nul_funct}
%%%%%%%%%%%%%%%%%%%%%%%%%%%%%%%%%%%%%%%%%%%%%%%%

The way we produce a functorial relative plus construction is by repackaging the information given by a space and a normal subgroup of its fundamental group as a covering of that space. We need, therefore, some notation and a few results about nullification functors in the arrow category. The simplicial model structure described in \cref{thm:model_category_Arr(sSet)} corresponds to a Reedy structure on the poset $0<1$. Therefore \cite[Theorem~15.3.4]{Hirschhorn2003} tells us that this model structure is left proper and 
\cite[Theorem~15.7.6]{Hirschhorn2003}
that it is also cellular. Hence functorial localization functors exist in the category of arrows, see \cite[Section~4.3]{Hirschhorn2003}.

%%%%%%%%%%%%%%%%%%%%%%%%%%%%%%%%%%%%%%%%%%%%%%%%
\begin{defn}
[{\cite[3.1.4.1]{Hirschhorn2003}}]
%%%%%%%%%%%%%%%%%%%%%%%%%%%%%%%%%%%%%%%%%%%%%%%%
Let $w\colon W_0\to W_1$ be a map between pointed spaces. We say that a map between pointed spaces $f\colon X_0\to X_1$ is \emph{$w$-null} if $f$ is fibrant (see Theorem~\ref{thm:model_category_Arr(sSet)}) and the mapping space $\map(w,f)$ is contractible.
A morphism of arrows $\alpha\colon g \to h$ is a \emph{$w$-local equivalence} if $\map(\alpha,f)\colon \map(h,f)\to \map (g,f)$ is a weak homotopy equivalence for all $w$-null maps $f$.
\end{defn}

We are ready for the formal definition of a nullification functor. For each arrow, there is a ``best possible'' approximation by a null arrow. 

%%%%%%%%%%%%%%%%%%%%%%%%%%%%%%%%%%%%%%%%%%%%%%%%
\begin{defn}
%%%%%%%%%%%%%%%%%%%%%%%%%%%%%%%%%%%%%%%%%%%%%%%%
A \emph{$(w\colon W_0\to W_0)$-nullification functor} is a homotopy idempotent coaugmented functor $\PP_w$ that assigns to any arrow $f\colon X_0\to X_1$ a $w$-null arrow $\PP_wf$ together with a $w$-local equivalence $q\colon f\to \PP_wf$.
\end{defn}

Nullification functors exist as mentioned above, and just as any localization functor, enjoy a universal property, making the informal approximation idea precise.

%%%%%%%%%%%%%%%%%%%%%%%%%%%%%%%%%%%%%%%%%%%%%%%%
\begin{prop}[{\cite[3.2.19]{Hirschhorn2003}}]\label{prop:characterization_nulification}
%%%%%%%%%%%%%%%%%%%%%%%%%%%%%%%%%%%%%%%%%%%%%%%%
Let $w\colon W_0\to W_1$ and $f\colon X_0\to X_1$ be a pair of maps between pointed spaces, and $q\colon f\to \PP_wf$ the $w$-nullification map of $f$. If $f\colon X_0\to X_1$ is cofibrant, then for every $w$-null object $g$, and every commutative square $\alpha\colon f\to g$, there is a commutative square $\beta \colon \PP_wf\to g$, unique up to homotopy, such that $\beta\circ q=\alpha$. This is, the following diagram commutes:
\[
\begin{tikzcd}
X_0 \arrow[d, "f"] \arrow[r, "q_0"] \arrow[rr, "\alpha_0", bend left] & Z_0 \arrow[d, "\PP_wf"] \arrow[r, "\beta_0"] & Y_0 \arrow[d, "g"] \\
X_1 \arrow[r, "q_1"] \arrow[rr, "\alpha_1", bend right]               & Z_1 \arrow[r, "\beta_1"]                   & Y_1.               
\end{tikzcd}
\]
\end{prop}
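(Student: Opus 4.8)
The plan is to deduce the universal property directly from the two defining features of the nullification map $q\colon f\to \PP_w f$: that it is a $w$-local equivalence and that its target $\PP_w f$ is $w$-null. Everything then becomes a statement about the mapping spaces of \cref{thm:model_category_Arr(sSet)}, and the hypothesis that $f$ be cofibrant is exactly what makes those mapping spaces homotopically meaningful. Concretely, I would first record that, by the definition of a nullification functor, $q$ is a $w$-local equivalence; applying the definition of $w$-local equivalence to the given $w$-null object $g$, the induced map
\[
\map(q,g)\colon \map(\PP_w f, g)\longrightarrow \map(f,g)
\]
is a weak homotopy equivalence of simplicial sets. Under the pullback description of $\map(-,g)$ recalled above this map is precomposition with $q$, so in particular it is a bijection on $\pi_0$.

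Next I would identify the two sets of components. Since $g$ is $w$-null it is fibrant by \cref{thm:model_category_Arr(sSet)}, $f$ is cofibrant by hypothesis, and the localization coaugmentation $q$ is a cofibration in Hirschhorn's cellular construction, so $\PP_w f$ is cofibrant as well. Hence both $\map(f,g)$ and $\map(\PP_w f, g)$ are genuine homotopy function complexes whose $0$-simplices are precisely the commutative squares $f\to g$ and $\PP_w f\to g$, two such squares lying in the same component exactly when they are homotopic. Existence of $\beta$ is then surjectivity of the bijection on the class $[\alpha]$, yielding a square $\beta\colon \PP_w f\to g$ with $\beta\circ q\simeq \alpha$, and uniqueness up to homotopy is injectivity: if $\beta\circ q\simeq\alpha\simeq\beta'\circ q$ then $[\beta]=[\beta']$. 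To upgrade $\beta\circ q\simeq\alpha$ to the on-the-nose equality $\beta\circ q=\alpha$ of the statement, I would instead solve the lifting problem with left map $q$ and right map $g\to *$: as $q$ is simultaneously a cofibration and a $w$-local equivalence it is a trivial cofibration in the $w$-local model structure, while $g$ is fibrant there, so a lift $\beta$ with $\beta\circ q=\alpha$ exists.

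The argument is genuinely soft, so the main obstacle is bookkeeping rather than any hard estimate: one must check that the definition of $w$-local equivalence really applies to $g$ (it does, as $g$ is $w$-null), and one must justify that $\pi_0\map(f,g)$ computes homotopy classes of commutative squares. This last identification is exactly where cofibrancy of $f$ is indispensable, since without it $\map(f,g)$ need not be homotopy invariant and the passage between $\pi_0$ and ``squares up to homotopy'' fails. I would also take care that precomposition with $q$ is the correct map on mapping spaces and that the relation detected by $\pi_0$ of the mapping space is the intended notion of homotopy of squares.
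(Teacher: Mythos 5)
Your argument is correct, but note that the paper does not prove this proposition at all: it is imported verbatim from Hirschhorn \cite[3.2.19]{Hirschhorn2003}, so the comparison is really with Hirschhorn's proof rather than with anything written in the paper. Your route is essentially that standard argument: since $q$ is a $w$-local equivalence and $g$ is $w$-null (hence fibrant), the map $\map(\PP_w f,g)\to\map(f,g)$ is a weak equivalence, and the resulting bijection on $\pi_0$ gives existence and uniqueness up to homotopy, with cofibrancy of the source (in fact automatic here, since every object of $\Arr(\sSet_*)$ is cofibrant in the structure of \cref{thm:model_category_Arr(sSet)}) guaranteeing that $\pi_0$ of the strict mapping space computes homotopy classes of squares. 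The one point that genuinely deserves the care you give it is the strict equality $\beta\circ q=\alpha$: this is \emph{not} a formal consequence of the paper's definition of a nullification functor, which only asks that $q$ be a $w$-local equivalence, and it really requires $q$ to be a cofibration. You correctly supply this from Hirschhorn's cellular construction, where the coaugmentation is a relative cell complex; this matches the ``cofibrant localization'' hypothesis built into \cite[3.2.19]{Hirschhorn2003}. Your upgrade step differs mildly from his: Hirschhorn replaces the homotopy factorization by a strict one using the homotopy extension property of the cofibration $q$ against the fibrant $g$, whereas you solve a lifting problem in the $w$-localized model structure; the latter is legitimate because, as the paper records, $\Arr(\sSet_*)$ is left proper and cellular, so that localized structure exists, its cofibrations agree with the original ones, and its fibrant objects are exactly the $w$-null maps. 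Either variant closes the argument, so there is no gap.
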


We turn now to the special case of interest, namely when $w$ is an identity map.

%%%%%%%%%%%%%%%%%%%%%%%%%%%%%%%%%%%%%%%%%%%%%%%%
\begin{prop}
%%%%%%%%%%%%%%%%%%%%%%%%%%%%%%%%%%%%%%%%%%%%%%%%
Let $W$ be a pointed space and $f\colon  X_0\to X_1$ be a map between pointed spaces. Then, $f$ induces a homeomorphism of spaces
$f^{\lhd}\colon \map(W, X_0) \longrightarrow \map(\id_W,f)$.
\end{prop}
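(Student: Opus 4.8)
The plan is to build $f^{\lhd}$ one simplicial degree at a time and then check that it is an isomorphism of simplicial sets, which realizes to the desired homeomorphism of spaces. First I would unwind the defining formula
\[
\map(\id_W, f)_n = \Hom_{\Arr(\sSet_*)}(\id_W \wedge \id_{\Delta[n]_+},\, f).
\]
By the description of morphisms of arrows as commuting squares, such an $n$-simplex is a pair of pointed maps $\alpha_0\colon W \wedge \Delta[n]_+ \to X_0$ and $\alpha_1\colon W \wedge \Delta[n]_+ \to X_1$ forming a square whose left vertical edge is $\id_{W \wedge \Delta[n]_+}$ and whose right vertical edge is $f$. Commutativity of the square forces $\alpha_1 = f \circ \alpha_0$, so the simplex is completely determined by its top map $\alpha_0$.

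This is the heart of the argument: the rule $\alpha_0 \mapsto (\alpha_0,\, f \circ \alpha_0)$ defines a bijection from $\map(W, X_0)_n = \Hom_{\sSet_*}(W \wedge \Delta[n]_+, X_0)$ onto $\map(\id_W, f)_n$, with inverse given by projecting a square to its top map. I would take $f^{\lhd}$ to be this family of bijections and then verify naturality in $n$: since the face and degeneracy operators on both sides are induced by the standard maps among the $\Delta[n]$'s, and precomposition with these maps commutes with postcomposition by $f$, the bijections are compatible with faces and degeneracies. A degreewise bijection of simplicial sets respecting the simplicial operators is an isomorphism of simplicial sets, so $f^{\lhd}$ is a homeomorphism on realizations.

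A cleaner packaging of the same computation uses the pullback description of mapping spaces recorded after \cref{thm:model_category_Arr(sSet)}. Specializing that square to source $\id_W$ and target $f$ presents $\map(\id_W, f)$ as the pullback of $\map(W, X_0) \xrightarrow{f_*} \map(W, X_1) \xleftarrow{(\id_W)^*} \map(W, X_1)$; but $(\id_W)^*$ is the identity of $\map(W, X_1)$, so the pullback collapses canonically onto $\map(W, X_0)$, and this collapse is exactly $f^{\lhd}$. I do not expect any genuine obstacle here. The only point demanding care is the bookkeeping that it is the left vertical edge of the defining square that becomes an identity, together with confirming that the resulting degreewise bijection is honestly simplicial rather than merely a bijection in each degree.
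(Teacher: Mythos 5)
Your proof is correct and is essentially the paper's own argument: an $n$-simplex of $\map(\id_W,f)$ is a commuting square whose left vertical edge is an identity, hence is completely determined by its top map $\alpha_0$ (with $\alpha_1 = f\circ\alpha_0$ forced), and this degreewise bijection is manifestly compatible with the simplicial operators. Your alternative packaging via the pullback description is the same observation restated using the paper's own pullback square for $\map(f,g)$, so it is not a genuinely different route, just a tidier formulation of the identical computation.
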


\begin{proof}
A map of squares is completely determined here by the top arrow. More formally $f^{\lhd}$ sends an $n$-simplex $g\colon W\wedge \Delta[n]_+\to X_0$ to the commutative square:
\[
\begin{tikzcd}
W\wedge \Delta[n]_+\arrow[r,"g"]\arrow[d,"\id_W\wedge \id_{\Delta[n]_+}"']&  X_0\arrow[d,"f"]\\
W\wedge \Delta[n]_+\arrow[r,"f\circ g"]&  X_1.
\end{tikzcd}
\] 
which defines an $n$-simplex of the mapping space in $\Arr(\sSet_*)$.
\end{proof}

This yields immediately a characterization of $\id_W$-null arrows.

%%%%%%%%%%%%%%%%%%%%%%%%%%%%%%%%%%%%%%%%%%%%%%%%
\begin{cor}
\label{cor:idnull}
%%%%%%%%%%%%%%%%%%%%%%%%%%%%%%%%%%%%%%%%%%%%%%%%
Let $W$ be a pointed space, and $f\colon X_0\to X_1$ be a map between pointed spaces. Then $f$ is $\id_W$-null if and only if $X_0$ is $W$-null. \hfill{$\square$}
\end{cor}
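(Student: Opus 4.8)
The plan is to collapse both sides of the biconditional onto the single condition that $\map(W,X_0)$ be contractible, transported across the homeomorphism supplied by the preceding proposition. First I would unwind the definition of $\id_W$-null: by definition $f$ is $\id_W$-null exactly when $f$ is fibrant in $\Arr(\sSet_*)$ and the mapping space $\map(\id_W,f)$ is contractible. The preceding proposition gives a homeomorphism $f^{\lhd}\colon \map(W,X_0)\xrightarrow{\ \cong\ }\map(\id_W,f)$, so contractibility of $\map(\id_W,f)$ is equivalent to contractibility of $\map(W,X_0)$. This identification is the whole point of the argument and immediately matches the ``local'' halves of the two notions.

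It then remains to reconcile the two fibrancy requirements. By the corollary to \cref{thm:model_category_Arr(sSet)}, the object $f$ is fibrant if and only if $f$ is a Kan fibration with $X_1$ a Kan complex; in that situation $X_0$ is the total space of a Kan fibration over a Kan complex and hence is itself a Kan complex, i.e.\ fibrant in $\sSet_*$. Thus from $f$ being $\id_W$-null I would read off that $X_0$ is fibrant and that $\map(W,X_0)$ is contractible, which is precisely the statement that $X_0$ is $W$-null. For the converse I would take $f$ to be fibrant, as is standard when discussing null objects, so that fibrancy of $X_0$ comes for free and the remaining contractibility condition again crosses back along $f^{\lhd}$.

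I expect the only delicate point to be this bookkeeping of fibrancy: the $\id_W$-null condition encodes fibrancy of the arrow $f$ (a Kan fibration with Kan codomain), whereas $W$-nullity of $X_0$ only constrains the domain, so the equivalence reads most cleanly once one works, as usual, with fibrant $f$. Beyond that the corollary is a formal consequence of the homeomorphism $f^{\lhd}$, and I would accordingly keep the proof to a single sentence citing the preceding proposition together with this fibrancy remark.
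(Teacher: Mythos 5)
Your proposal is correct and follows the same route as the paper, which treats the corollary as an immediate consequence of the homeomorphism $f^{\lhd}\colon \map(W,X_0)\to\map(\id_W,f)$ from the preceding proposition and gives no further argument. Your extra bookkeeping of the fibrancy conditions (fibrancy of the arrow $f$ forces $X_0$ to be a Kan complex, while the converse is read with $f$ fibrant as usual) is a sound and welcome clarification of a point the paper leaves implicit.
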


We conclude this section with a description of the $\id_W$-nullification functor. The intuitive idea behind this construction is clear: Up to homotopy, the nullification $\PP_{\id_W}$ is constructed by successively killing all maps from $\id_W$ and here we are thus killing the same maps out of $W$ into the source and the target until the source becomes $W$-null.

%%%%%%%%%%%%%%%%%%%%%%%%%%%%%%%%%%%%%%%%%%%%%%%%
\begin{prop}\label{prop:nullification_respect_identity}
%%%%%%%%%%%%%%%%%%%%%%%%%%%%%%%%%%%%%%%%%%%%%%%%
Let $W$ be a pointed space, and $f\colon  X_0\to X_1$ be a map between pointed connected spaces. Then $\PP_{\id_W}f\colon  Y_0 \to Y_1$, the $\id_W$-nullification of $f$, is the homotopy pushout of $f$ along the $W$-nullification $X_0 \rightarrow \PP_W X_0 = Y_0$, i.e., the following diagram is a homotopy pushout.
\[
\begin{tikzcd}
X_0 \arrow[d, "f"] \arrow[r] & \PP_W X_0 \arrow[d, "\PP_{\id_W}(f)"] \\
X_1 \arrow[r]                       & Y_1.
\end{tikzcd}
\]
\end{prop}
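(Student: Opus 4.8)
The plan is to exhibit the homotopy pushout as a concrete model for the $\id_W$-nullification and then appeal to the uniqueness of localizations. We may assume $f$ is cofibrant. Write $\eta\colon X_0\to \PP_W X_0=Y_0$ for the $W$-nullification of the source, form the homotopy pushout $Y_1$ of $X_1\xleftarrow{f}X_0\xrightarrow{\eta}Y_0$, and let $g\colon Y_0\to Y_1$ be the canonical arrow, equipped with the coaugmentation $\alpha\colon f\to g$ whose components are $\eta$ and the structure map $X_1\to Y_1$. Replacing $g$ by a fibration between fibrant objects if needed, it suffices to check that (i) $g$ is $\id_W$-null and (ii) $\alpha$ is an $\id_W$-local equivalence. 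Indeed, the $\id_W$-nullification is characterized up to homotopy as an $\id_W$-local equivalence out of $f$ into an $\id_W$-null object, by \cref{prop:characterization_nulification} together with the general uniqueness of Bousfield localizations; so (i) and (ii) identify $g$ with $\PP_{\id_W}f$ and its bottom component with the map $\PP_{\id_W}(f)$ of the statement. Condition (i) is immediate from \cref{cor:idnull}: an arrow is $\id_W$-null exactly when its source is $W$-null, and here the source is $Y_0=\PP_W X_0$, which is $W$-null by construction.

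For (ii), fix an $\id_W$-null arrow $h\colon Z_0\to Z_1$; by \cref{cor:idnull} the space $Z_0$ is $W$-null. I must show that $\map(\alpha,h)\colon \map(g,h)\to\map(f,h)$ is a weak equivalence. The plan is to start from the pullback presentation of mapping spaces recalled in \cref{sec:model_cat_structure} and to use that $Y_1$ is a homotopy pushout, so that $\map(Y_1,Z_1)\simeq \map(X_1,Z_1)\times^{h}_{\map(X_0,Z_1)}\map(Y_0,Z_1)$. The defining square of $\map(g,h)$ and this last homotopy pullback then paste together,
\[
\begin{tikzcd}
\map(g,h)\arrow[r]\arrow[d] & \map(Y_1,Z_1)\arrow[r]\arrow[d] & \map(X_1,Z_1)\arrow[d,"f^*"]\\
\map(Y_0,Z_0)\arrow[r,"h_*"] & \map(Y_0,Z_1)\arrow[r,"\eta^*"] & \map(X_0,Z_1),
\end{tikzcd}
\]
and the pasting law for homotopy pullbacks collapses the outer rectangle to
\[
\map(g,h)\simeq \map(Y_0,Z_0)\times^{h}_{\map(X_0,Z_1)}\map(X_1,Z_1),
\]
with structure maps $\eta^*h_*$ and $f^*$. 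The analogous presentation gives $\map(f,h)\simeq \map(X_0,Z_0)\times^{h}_{\map(X_0,Z_1)}\map(X_1,Z_1)$, and $\map(\alpha,h)$ is exactly the map of homotopy pullbacks induced by $\eta^*\colon \map(Y_0,Z_0)\to\map(X_0,Z_0)$ over the common leg $\map(X_1,Z_1)\xrightarrow{f^*}\map(X_0,Z_1)$.

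The argument then reduces to a single weak equivalence. Since $Z_0$ is $W$-null and $\eta$ is the $W$-nullification, the universal property of $\PP_W$ makes $\eta^*\colon\map(Y_0,Z_0)\to\map(X_0,Z_0)$ a weak equivalence, while the remaining vertex $\map(X_1,Z_1)$ and base $\map(X_0,Z_1)$ of the two cospans coincide and the legs $f^*$ agree. Invariance of the homotopy pullback under objectwise weak equivalences of cospans then shows that $\map(\alpha,h)$ is a weak equivalence, which is (ii). I expect the only real work to lie in the bookkeeping: arranging that the mapping-space squares are genuine homotopy pullbacks (hence attending to fibrancy of $h$ and cofibrancy of the sources so that the pullback formula computes the derived mapping space), justifying that the right-hand square above is precisely the homotopy pullback encoding the pushout $Y_1$, and verifying that the comparison is induced by $\eta^*$ over the shared cospan rather than by some twisted identification.
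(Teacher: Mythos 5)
Your proof is correct, and its skeleton---form the homotopy pushout $g\colon Y_0\to Y_1$, check it is $\id_W$-null via \cref{cor:idnull}, check the coaugmentation $\alpha\colon f\to g$ is an $\id_W$-local equivalence, and conclude by uniqueness of Bousfield localization---is the same as the paper's. Where you genuinely differ is in how the local-equivalence condition is verified. The paper replaces $f$ by a cofibration, forms the strict pushout $P$, and argues by hand: given $\alpha\colon f\to h$ with $h$ an $\id_W$-null arrow, the component $\alpha_0$ factors through $\PP_W X_0$ because $Z_0$ is $W$-null, and the universal property of the strict pushout then produces the factorization $P\to Z_1$; this yields a bijection on $0$-simplices of $\map(f',h)\to\map(f,h)$, and higher simplices are disposed of with the remark that ``the same argument applies''. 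You instead compute both $\map(g,h)$ and $\map(f,h)$ as homotopy pullbacks, using the pullback presentation of arrow-category mapping spaces from \cref{sec:model_cat_structure} together with the fact that $\map(-,Z_1)$ carries the homotopy pushout defining $Y_1$ to a homotopy pullback; the pasting law then exhibits $\map(\alpha,h)$ as induced by the single map $\eta^*\colon\map(Y_0,Z_0)\to\map(X_0,Z_0)$ over a fixed cospan, and this is a weak equivalence precisely because $Z_0$ is $W$-null and $\eta$ is the $W$-nullification. Your route is more structured and in fact more complete: it treats all simplices at once and produces a genuine weak equivalence of mapping spaces, making rigorous exactly the point the paper leaves informal. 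It also subsumes the paper's separate first step (identifying the source of $\PP_{\id_W}f$ with $\PP_W X_0$ by an initiality argument), since your uniqueness argument pins down both components simultaneously. The bookkeeping you flag at the end is real but harmless: $h$ is fibrant by the very definition of $\id_W$-null, so $h_*$ is a Kan fibration and the strict pullbacks are homotopy pullbacks (all simplicial sets being cofibrant), and a strictly commuting model of the coaugmentation is obtained, as in the paper, by replacing $\eta$ or $f$ by a cofibration before taking the pushout. None of this is a gap.
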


\begin{proof}
We divide the proof into two steps. First, we prove that $Y_0=\PP_W X_0$. We know by Corollary~\ref{cor:idnull} that $Y_0$ is $W$-null. Let $j\colon X_0\to Z$ be any map to a $W$-null space. We show that it factors through $Y_0$. Indeed, we can extend the nullification map to a commutative diagram of solid arrows:
 \[
\begin{tikzcd}
 X_0 \arrow[r] \arrow[d] \arrow[rr, "j", bend left] & Y_0 \arrow[d] \arrow[r, dashed] & Z \arrow[d] \\
X_1 \arrow[r]                                             & Y_1 \arrow[r]                  & \ast.      \end{tikzcd}
\]
By Proposition~\ref{prop:characterization_nulification}, there exists a dashed factorization through the $\id_W$-nullification of~$f$.
But this is a factorization of $j$ through $Y_0$, showing that $Y_0$ is initial up to homotopy among $W$-null spaces, hence $Y_0\simeq \PP_W X_0$.

To prove that $Y_1\simeq \hocolim(X_1\leftarrow  X_0\to \PP_W X_0)$, we follow a similar argument. We replace $f$ with a cofibration if needed and construct the
strict pushout $P$ of the diagram $X_1 \xleftarrow{f}  X_0 \rightarrow  Y_0$. We prove that the left-hand side square in the diagram below verifies the universal property of the nullification, namely that any morphism $\alpha$ from $f$ to an $\id_W$-null map $h\colon Z_0 \rightarrow Z_1$ factors uniquely through $\PP_W  X_0 \rightarrow P$:
\[
\begin{tikzcd}
X_0 \arrow[r] \arrow[d, hookrightarrow, "f"'] \arrow[rr, "\alpha_0", bend left] & \PP_W X_0 \arrow[d, hookrightarrow, "f'"] \arrow[r, dashed] &  Z_0 \arrow[d, "h"] \\
X_1 \arrow[r] \arrow[rr,"\alpha_1"', bend right] & P \arrow[r, dashed] & Z_1.
\end{tikzcd}
\]
Let us thus assume that $h$ is $\id_W$-null, i.e. $Z_0$ is $W$-local. Therefore the top arrow $\alpha_0$ factors through $\PP_W X_0$ up to homotopy. We may assume that the localization map $X_0 \rightarrow \PP_W X_0$ is a cofibration and that $Z_0$ is fibrant to make it strictly commutative. 
We conclude by the universal property of the pushout that the dashed arrow $P \to Z_1$ also exists and makes the diagram commute. This proves that the morphism $f \rightarrow f'$ induces a bijection on $0$-simplices of mapping spaces $\map(f', h) \rightarrow \map(f, h)$. The same argument applies to higher simplices so that $Y_1\simeq P$.
\end{proof}
%%%%%%%%%%%%%%%%%%%%%%%%%%%%%%%%%%%%%%%%%%%%%%%%%%%%%%%%%%%%%%%
%%%%%%%%%%%%%%%%%%%%%%%%%%%%%%%%%%%%%%%%%%%%%%%%%%%%%%%%%%%%%%%
%%%%%               +construction                         %%%%%
%%%%%%%%%%%%%%%%%%%%%%%%%%%%%%%%%%%%%%%%%%%%%%%%%%%%%%%%%%%%%%%
%%%%%%%%%%%%%%%%%%%%%%%%%%%%%%%%%%%%%%%%%%%%%%%%%%%%%%%%%%%%%%%
\section{Plus-construction}
%%%%%%%%%%%%%%%%%%%%%%%%%%%%%%%%%
%%%%%%%%%%%%%%%%%%%%%%%%%%%%%%%%%
The integral plus construction for a space $X$, as introduced by Quillen in \cite{Quillen1971}, is a map $q\colon X\to X^+$ such that $H_*(q;\ZZ)$ is an isomorphism and $\pi_1(q)\colon \pi_1(X)\to \pi_1(X^+)$ is an epimorphism whose kernel is the maximal perfect (normal) subgroup $P$ of $\pi_1(X)$. The standard method for building this map can be found in textbooks, such as Hatcher's \cite[p. 373-374]{Hatcher2002}, and we will come back to it in the next section.

%%%%%%%%%%%%%%%%%%%%%%%%%%%%%%%%%%%%%%%%%%%%%%%%%%%%%%%%%%%%%%%
\subsection{Plus-constructions and nullification functors}
%%%%%%%%%%%%%%%%%%%%%%%%%%%%%%%%%%%%%%%%%%%%%%%%%%%%%%%%%%%%%%%

Bousfield explains in \cite[Theorem~4.4]{MR1481817} that each homotopical localization functor has a best possible approximation by a nullification functor. When the localization functor is localization with respect to ordinary homology with integral coefficients, we obtain a functorial Quillen plus construction as observed by Casacuberta in \cite[Theorem~6.5]{MR1290581}. He also notes that one can take the homotopy cofiber $A$ of a universal homology equivalence so that $\PP_A X \simeq X^+$.
%that for the maximal perfect subgroup, the plus construction is described as nullification concerning a universal acyclic space $A$. 
This provides a direct way to generalize Quillen's plus construction to any homology theory. Let $h$ be an arbitrary generalized homology theory. There exists a \emph{universal} $h$-equivalence $f$ such that the localization with respect to all $h$-equivalences coincides with the localization with respect to $f$, an idea that goes back at least to Bousfield's \cite{MR380779}.

%%%%%%%%%%%%%%%%%%%%%%%%%%%%%%%%%%%%%%%%%%
\begin{defn}
\label{def:plus}
%%%%%%%%%%%%%%%%%%%%%%%%%%%%%%%%%%%%%%%%%%
Let $h$ be a homology theory, $f$ a universal $h$-equivalence and $A$ its homotopy cofiber.
The \emph{$h$-plus construction} of a space $X$ is the $A$-nullification of $X$, so $X^{+h} = \PP_A X$.
\end{defn}

This appears for example in \cite[Remark~6.A.1.1]{Farjoun1996}. Functoriality of the plus construction allows us for example to identify the first algebraic K-theory groups by playing with fibration sequences coming from group extensions related to $\mathrm{GL}(R)$, see \cite[Section~3]{MR1775748}, replacing the ``up to homotopy'' arguments from \cite[Section~5]{MR649409} by strict commutativity.
Let us record a few well-known facts.

%%%%%%%%%%%%%%%%%%%%%%%%%%%%%%%%%%%%%%%%%%
\begin{lem}
\label{lem:propertyplusconstruction}
%%%%%%%%%%%%%%%%%%%%%%%%%%%%%%%%%%%%%%%%%%
Given a homology theory $h$ and a connected space $X$, the $h$-plus construction $q\colon X\to X^{+h}$ induces an isomorphism in homology, $h_*(q)\colon h_*(X)\to h_*(X^{+h})$ and an epimorphism on the fundamental group, $\pi_1(q)\colon \pi_1(X)\to \pi_1(X^{+h})$.
\end{lem}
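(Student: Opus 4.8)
The plan is to prove \cref{lem:propertyplusconstruction} by unpacking the definition $X^{+h} = \PP_A X$ where $A$ is the homotopy cofiber of a universal $h$-equivalence $f$, and extracting the two asserted properties from the general theory of $A$-nullification. I would organize this around two independent claims: the homology isomorphism and the surjectivity on $\pi_1$.

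\emph{Homology isomorphism.} First I would recall that $A$ is $h$-acyclic, i.e.\ $\tilde h_*(A) = 0$, since it is the cofiber of an $h$-equivalence $f$; this follows from the long exact sequence of the cofiber in $h$-homology. The key input is that for a nullification with respect to an $h$-acyclic space, the coaugmentation $q\colon X \to \PP_A X$ is an $h$-equivalence. The cleanest way to see this is to observe that $\PP_A$, as the nullification at the cofiber of a universal $h$-equivalence, agrees with (or is the coaugmentation-compatible approximation of) the $h$-homological localization $\LL_h$ in the relevant range; more directly, the map $q$ is built by attaching copies of the cone on $A$ (an $h$-acyclic construction), so each elementary step preserves $h$-homology, and the homotopy colimit defining $\PP_A X$ therefore induces an isomorphism on $h_*$. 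Since this is stated as a ``well-known fact,'' I would cite the standard theory (Bousfield–Farjoun) rather than reprove the acyclicity of the nullification map from scratch.

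\emph{Surjectivity on $\pi_1$.} For the fundamental group I would use that $A$ is connected and $h$-acyclic, hence in particular $H_1(A;\ZZ)$ vanishes for connective $h$ after identifying the relevant low-dimensional behavior — but more robustly, I would argue that $A$-nullification of a connected space never \emph{creates} new fundamental group elements. Concretely, the construction of $\PP_A X$ only attaches cells coming from maps out of $A$ and cones thereof, which can kill or identify loops but never introduce genuinely new ones not in the image of $\pi_1(X)$; thus $\pi_1(q)$ is surjective. Alternatively, and perhaps more cleanly, one uses that $q$ is an $h$-homology equivalence together with the fact that $A$ acyclic forces $\pi_1(\PP_A X)$ to be a quotient of $\pi_1(X)$ (the nullification kills the maximal $h$-perfect subgroup and nothing more), so $\pi_1(q)$ is epi.

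The main obstacle I anticipate is pinning down the $\pi_1$-surjectivity at the right level of generality for an \emph{arbitrary} connective homology theory $h$, rather than just integral homology where the classical Quillen picture applies verbatim. For general $h$ the relationship between $h$-acyclicity and $\pi_1$ is mediated by the $h$-characteristic group $S$ (alluded to in the introduction), and one must be careful that ``$A$ acyclic'' controls the fundamental group correctly. I expect the paper sidesteps this subtlety here by treating \cref{lem:propertyplusconstruction} as a recollection of standard facts whose detailed justification either appears in the cited references or is deferred to the later sections where the $h$-characteristic group is developed; accordingly my proof plan would lean on citing \cite{Farjoun1996} and \cite{Bousfield1974} for the precise statements and keep the argument short.
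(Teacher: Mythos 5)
Your proposal takes essentially the same approach as the paper, whose entire proof consists of the two citations you anticipate: any nullification functor induces an epimorphism on $\pi_1$ by \cite[Proposition~2.9]{MR1257059}, and since $A$ is $h$-acyclic the coaugmentation $X \to \PP_A X$ is an $h$-equivalence (your cone-attachment sketch is exactly the standard argument behind this). One caveat: your ``alternative'' justification of $\pi_1$-surjectivity --- that the nullification kills the maximal $h$-perfect subgroup and nothing more --- would be circular here, since that statement is established later (\cref{prop:perfectkernel}, \cref{cor:hperfectisSperfect}) using this very lemma, so you should rely on your primary argument instead.
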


\begin{proof}
    Any nullification functor induces an epimorphism on $\pi_1$ by \cite[Proposition~2.9]{MR1257059}, and since we choose here an $h$-acyclic space $A$, then $X \to \PP_A X$ is an $h$-equivalence.
\end{proof}

%%%%%%%%%%%%%%%%%%%%%%%%%%%%%%%%%%%%%%%%%%
\begin{rmk}
\label{rem:acyclization}
%%%%%%%%%%%%%%%%%%%%%%%%%%%%%%%%%%%%%%%%%%
The homotopy fiber of the $A$-nullification is the $A$-acyclization functor when $A$ is a universal $h$-acyclic space. This has been first studied in \cite{MR315713} for ordinary homology. When $h = HR$ is ordinary homology with coefficients in $R$, we write $\Acy^R X$ for the homotopy fiber of $X \rightarrow X^{+ R}$. 
%Let us mention that it agrees with the $A$-cellularization functor, defined by Farjoun in \cite[2.D.2.5]{Farjoun1996}. One way to understand this is by interpreting this homotopy fiber with the theory of closed classes, as advertised by Chach\'olski in \cite{MR1408539}. The homotopy fiber of the nullification map $X \to \PP_A X$ defines a functor $\overline{\PP}_A X$ whose essential image consists in the smallest closed class containing $A$ and closed under pointed homotopy colimits and extensions by fibrations. In our setting, closure under pointed homotopy colimits already yields all $h$-acyclic spaces, so we are dealing with a \emph{cellularization} functor. This will not play a significant role in this article but hopefully sheds some light on the different constructions we are using.
\end{rmk}

%%%%%%%%%%%%%%%%%%%%%%%%%%%%%%%%%%%%%%%%%%
\begin{defn}
\label{def:acyclic}
%%%%%%%%%%%%%%%%%%%%%%%%%%%%%%%%%%%%%%%%%%
Let $h$ be a homology theory and $A$ a universal $h$-acyclic space. The \emph{acyclization} functor $\Acy^h$ associates to each space $X$ the homotopy fiber of the nullification map $X \rightarrow X^{+ h}$.
\end{defn}

A group is perfect if it is equal to its commutator subgroup, and this is equivalent in turn with $K(G, 1)^+$ being simply connected. This motivates the following generalization, see \cite[Definition~2.1]{Mislin2001}, where the following notion is called $h1$-perfect.

%%%%%%%%%%%%%%%%%%%%%%%%%%%%%%%%%%%%%%%%%%
\begin{defn}\label{def:h-perfect_group}
%%%%%%%%%%%%%%%%%%%%%%%%%%%%%%%%%%%%%%%%%%
Let $h$ be a homology theory. A group $G$ is \emph{$h$-perfect} if $K(G,1)^{+h}$ is simply connected. 
\end{defn}

For generalized homology theories, interesting variants have been studied for higher Eilenberg-Mac Lane spaces by Mislin and Peschke.

%%%%%%%%%%%%%%%%%%%%%%%%%%%%%%%%%%%%%%%%%%%%%%%%%%%%%%%%%%%%%%%
\subsection{Plus-construction for connective homology theories}
%%%%%%%%%%%%%%%%%%%%%%%%%%%%%%%%%%%%%%%%%%%%%%%%%%%%%%%%%%%%%%%
 In this paper, we will only concentrate on connective homology theories. We follow the strategy in \cite{Mislin2001}, in the sense that we use the acyclization functor to understand the plus construction. Then, thanks to Bousfield's work \cite{Bousfield1974}, we reduce to the study to ordinary homology theories with coefficients in a nice abelian group, that he associates to any homology theory.
 
%%%%%%%%%%%%%%%%%%%%%%%%%%%%%%%%%%%%%%%%%%
\begin{defn}\label{def:h-characteristic}
%%%%%%%%%%%%%%%%%%%%%%%%%%%%%%%%%%%%%%%%%%
Let $h$ be a connective homology theory, $I$ be the set of primes $p$ such that $h_*(\operatorname{pt})$ is uniquely $p$-divisible, and $J$ the set of primes not in $I$. We define the \emph{$h$-characteristic group} 
\[
S_h=\left\{\begin{matrix}
\oplus_{p\in J}\ZZ/p & \text{ if } h_*(\operatorname{pt}) \text{ is torsion,}\\
\ZZ[I^{-1}]& \text{ if } h_*(\operatorname{pt})\text{ is not torsion.}\\
\end{matrix}
\right.
\]
\end{defn}

%%%%%%%%%%%%%%%%%%%%%%%%%%%%%%%%%%%%%%%%%%
\begin{exm}\label{exm:h-characteristic}
%%%%%%%%%%%%%%%%%%%%%%%%%%%%%%%%%%%%%%%%%%
If $h = k(n)$ is connective Morava K-theory, then $S_{k(n)} = \mathbb Z/p$. We are mostly interested in ordinary homology theories $HR$ for a ring $R$. When $R = \mathbb Q \times \mathbb Z/p$, the characteristic group is $\mathbb Z_{(p)}$, and for $R = \prod \mathbb Z/p$, where the product runs over all primes, the characteristic group is $\mathbb Z$. When $R= \overline{\mathbb F}_p$ is the algebraic closure of the field of $p$ elements, $p$ a prime, then $S_{HR} = \mathbb Z/p$.
\end{exm}

%%%%%%%%%%%%%%%%%%%%%%%%%%%%%%%%%%%%%%%%%%
\begin{thm}[{\cite[Theorem 1.1, Theorem 4.5]{Bousfield1974}}]\label{thm:homology_decomposition_ring}
%%%%%%%%%%%%%%%%%%%%%%%%%%%%%%%%%%%%%%%%%%
Let $h$ be a connective homology theory and $S_h$ be the associated $h$-characteristic group.
Then $h$ has the same acyclic spaces as $H_*(-;S_h)$.
\end{thm}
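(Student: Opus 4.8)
The plan is to prove the sharper pointwise statement: a pointed connected space $X$ is $h$-acyclic if and only if it is $HS_h$-acyclic. Write $E$ for the connective spectrum representing $h$, so that the reduced theory is $\widetilde h_*(X)=\pi_*(E\wedge\Sigma^\infty X)$ and $\pi_*E=h_*(\operatorname{pt})$; being $h$-acyclic means $E\wedge\Sigma^\infty X\simeq *$. The first step is to reinterpret the index sets of \cref{def:h-characteristic} homologically. Passing through the cofiber sequence $E\xrightarrow{\,p\,}E\to E/p$ and universal coefficients, one checks that $\pi_*E$ is uniquely $p$-divisible (i.e.\ $p\in I$) exactly when $H_*(E;\mathbb F_p)=0$, so that $p\in J$ is equivalent to $H_*(E;\mathbb F_p)\neq 0$; likewise $h_*(\operatorname{pt})$ is non-torsion exactly when $H_*(E;\QQ)=\pi_*E\otimes\QQ\neq 0$.

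The engine of the proof is that the connective spectrum $Z:=E\wedge\Sigma^\infty X$ can be tested for contractibility through field-coefficient homology, where the Künneth formula collapses. Explicitly, for any field $k$ one has $H_*(Z;k)\cong H_*(E;k)\otimes_k\widetilde H_*(X;k)$, with no correction terms, because $Hk$-modules split as wedges of suspensions of $Hk$. Moreover, since $Z$ is bounded below, $Z\simeq *$ if and only if $H_*(Z;\QQ)=0$ and $H_*(Z;\mathbb F_p)=0$ for every prime $p$: integral homology that is simultaneously torsion and uniquely $p$-divisible for all $p$ must vanish, and then Hurewicz gives $Z\simeq *$. Combining these two observations, I would argue
\[
E\wedge\Sigma^\infty X\simeq *\iff H_*(E;k)\otimes_k\widetilde H_*(X;k)=0 \text{ for } k=\QQ \text{ and each } k=\mathbb F_p.
\]
Over a field a tensor product vanishes iff one factor does, so the right-hand condition says precisely that $\widetilde H_*(X;k)=0$ for every field $k$ with $H_*(E;k)\neq 0$, that is, $\widetilde H_*(X;\mathbb F_p)=0$ for all $p\in J$ together with $\widetilde H_*(X;\QQ)=0$ in the non-torsion case.

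Both implications fall out of this single equivalence, but the step I expect to be the genuine obstacle is the forward one, $h$-acyclic $\Rightarrow$ ordinary-homology conditions: a priori a generalized homology theory could detect acyclicity more finely than ordinary homology, as periodic theories do. The naive attempt to read $\widetilde H_*(X;\mathbb F_p)$ off the bottom row of the Atiyah–Hirzebruch spectral sequence for $E$ fails, because that bottom row can be annihilated by higher differentials. The point of testing contractibility by field homology of the smash product—rather than by its homotopy groups—is exactly that it bypasses these differentials: the Künneth identity is differential-free, and connectivity enters only through Hurewicz in the contractibility criterion. This is also where the distinction from periodic theories becomes transparent, since connectivity of $E$ is what forces $H_*(E;\mathbb F_p)\neq 0$ to come from a bottom class.

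It remains to match the homology conditions with $HS_h$-acyclicity, i.e.\ $\widetilde H_*(X;S_h)=0$. In the torsion case $S_h=\oplus_{p\in J}\ZZ/p$, and since homology commutes with direct sums of coefficient groups, $\widetilde H_*(X;S_h)=\oplus_{p\in J}\widetilde H_*(X;\ZZ/p)$ vanishes iff $\widetilde H_*(X;\mathbb F_p)=0$ for all $p\in J$, with no rational condition, as required. In the non-torsion case $S_h=\ZZ[I^{-1}]$, and the standard fact that $H\ZZ[I^{-1}]$-acyclicity is detected by $\mathbb F_p$-homology for $p\in J$ together with $\QQ$-homology (write $\ZZ[I^{-1}]$ as a localization and use $\mathbb F_p=\ZZ[I^{-1}]/p$ and $\QQ=\ZZ[I^{-1}]\otimes\QQ$) identifies the two conditions. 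This closes the equivalence and yields \cref{thm:homology_decomposition_ring}.
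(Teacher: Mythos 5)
Your proof is correct, but note that the paper contains no proof to compare against: \cref{thm:homology_decomposition_ring} is quoted directly from Bousfield's \emph{Types of acyclicity}, so your proposal is a reconstruction of Bousfield's theorem rather than an alternative to an argument in the text. The route you take is the standard one and it is sound: test contractibility of the connective spectrum $Z=E\wedge\Sigma^\infty X$ against field coefficients, where the splitting of $Hk$-module spectra gives the Tor-free K\"unneth isomorphism $H_*(Z;k)\cong H_*(E;k)\otimes_k\widetilde H_*(X;k)$; detect contractibility of a bounded-below spectrum by $\QQ$- and $\mathbb F_p$-homology via the spectrum-level Hurewicz theorem; and translate the resulting field conditions into $\widetilde H_*(X;S_h)=0$ by elementary group theory in the two cases of \cref{def:h-characteristic}. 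One step you should make explicit: in the equivalence ``$p\in I \iff H_*(E;\mathbb F_p)=0$'', the backward implication is not just universal coefficients --- it is the connective Hurewicz theorem applied to $E/p$ (one has $H_*(E/p;\ZZ)\cong H_*(E;\mathbb F_p)=0$, hence $E/p\simeq *$, hence $p$ acts invertibly on $\pi_*E$); this is precisely where connectivity enters, and where the statement fails for periodic theories such as Morava $K$-theory, which is $H\mathbb F_p$-acyclic as a spectrum without being trivial, exactly as you anticipate. What the paper's citation buys is brevity; what your argument buys is a self-contained proof that isolates the role of connectivity, using essentially the same tools (mod-$p$ and rational homology of spectra, Hurewicz-type arguments) as Bousfield's original paper.
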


For simplicity, we call $S$ the characteristic group $S_{h}$ associated to the connective homology theory $h$ as explained in \cref{def:h-characteristic}. Bousfield's result tells us that the plus constructions associated to $h$ and $H_*(-; S)$ are the same. 
%Now, we end this subsection by providing concrete models for the plus-constructions associated to any connective homology theory. 
By \cref{thm:homology_decomposition_ring}, it is sufficient to understand the plus-construction associated to ordinary homology with coefficients in a torsion group of the form $\oplus_J \mathbb Z/p$ or a subring $\mathbb Z[I^{-1}]$ of the rationals. Furthermore, in the torsion case, a space is $H_*(-; \oplus_{p\in J} \mathbb Z/p)$-acyclic if and only if it is $H\mathbb Z/p$-acyclic for all $p \in J$. This means we only need to understand the plus-construction in two cases: for subrings of $\mathbb Q$ and for $\mathbb Z/p$.

In the case when $S = \ZZ$, see \cite{Berrick1999}, or $\ZZ/p$, for $p$ prime, see \cite{Casacuberta1999}, there are well-known and beautiful models of universal acyclic spaces associated with word radicals, \cite{Casacuberta2005}. For $S=\ZZ[I^{-1}]$, general methods guarantee the existence of a universal acyclic space, but we do not know of a small and concrete model in the spirit of the previous examples.

%%%%%%%%%%%%%%%%%%%%%%%%%%%%%%%%%%%%%%%%%%%%%%%%%%%%%%%%%%%%%%%
\subsection{Perfect and strongly perfect groups}
%%%%%%%%%%%%%%%%%%%%%%%%%%%%%%%%%%%%%%%%%%%%%%%%%%%%%%%%%%%%%%%
Given a connective homology theory $h$ and a group $G$, \cref{def:h-perfect_group} seems to be unpractical, in general, to determine if $G$ is $h$-perfect. In this section, we characterize $h$-perfect groups in purely algebraic terms, by using the characteristic group $S_h$ introduced in \cref{def:h-characteristic}. Moreover, when $h$ is an ordinary homology theory with coefficients in a ring $R$, we characterize it in terms of~$R$. Recall the classical definition of $R$-perfectness, and the more recent and stronger version from~\cite{Broto2021}.

%%%%%%%%%%%%%%%%%%%%%%%%%%%%%%%%%%%%%%%%%%
\begin{defn}
\label{exm:stronglyperfect}
%%%%%%%%%%%%%%%%%%%%%%%%%%%%%%%%%%%%%%%%%%
Let $R$ be a commutative ring $R$ with unit and consider $HR$, ordinary homology with coefficients in $R$. A group $G$ is \emph{$R$-perfect} if $H_1(G;R)=0$,  equivalently, if $H_1(G;\ZZ)\otimes_\ZZ R=0$. We say that $G$ is \emph{strongly $R$-perfect} if $G$ is $R$-perfect and $\operatorname{Tor}(H_1(G;\ZZ),R)=0$.
\end{defn}

Note that in general, for a ring $R$ of characteristic $0$, being $HR$-perfect in the sense of \cref{def:h-perfect_group} is not equivalent to be $R$-perfect. We will come back to this at the end of this section, see \cref{cor:hperfectisSperfect}.

%%%%%%%%%%%%%%%%%%%%%%%%%%%%%%%%%%%%%%%%%%
\begin{rmk}\label{rem:abuse}
%%%%%%%%%%%%%%%%%%%%%%%%%%%%%%%%%%%%%%%%%%
The above definition also makes sense when $R$ is not a ring, but a mere abelian group. We will use this slightly abusive terminology in the sequel and mention $S$-perfect groups for an abelian group $S$ which is not a ring in general. An important example is that of an infinite direct sum of cyclic groups of prime order $\ZZ/p$. In this case $S$-perfect means $\ZZ/p$-perfect for all primes $p$ appearing in the direct sum.
\end{rmk}

To give a characterization of $h$-perfectness we need the following algebraic result.

%%%%%%%%%%%%%%%%%%%%%%%%%%%%%%%%%%%%%%%%%%
\begin{lem}\label{lem:R_A_same_invertibles}
%%%%%%%%%%%%%%%%%%%%%%%%%%%%%%%%%%%%%%%%%%
Let $R$ be a ring of characteristic $0$ and $S$ be the associated $HR$-characteristic group. An integer $k\in \ZZ$ is invertible in $R$ if and only if it is so in $S$.
\end{lem}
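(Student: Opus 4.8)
The plan is to unpack the definition of the $HR$-characteristic group $S$ from \cref{def:h-characteristic} in the characteristic-zero case and compare divisibility in $R$ with divisibility in $S$ prime by prime. Since $\chara(R) = 0$, the group $h_*(\mathrm{pt}) = R$ is not torsion, so $S = \ZZ[I^{-1}]$ where $I$ is the set of primes $p$ such that $R$ is uniquely $p$-divisible (i.e.\ $H_*(\mathrm{pt}; \ZZ)$-style divisibility, here just $R$ itself viewed as an abelian group). The key observation is that for an integer $k = \pm\prod_i p_i^{a_i}$, invertibility of $k$ in a ring is equivalent to invertibility of each of its prime factors $p_i$; and likewise $k$ is invertible in $S = \ZZ[I^{-1}]$ if and only if every prime dividing $k$ lies in $I$. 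So the whole statement reduces to the case where $k = p$ is a single prime: I must show that $p$ is invertible in $R$ if and only if $p \in I$, i.e.\ if and only if $R$ is uniquely $p$-divisible as an abelian group.

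First I would record the reduction to primes explicitly: $k$ invertible in $R$ $\iff$ each prime factor invertible in $R$, and similarly for $S$, using that $S$ is an integral domain (a subring of $\QQ$). Then the heart of the argument is the equivalence ``$p$ invertible in $R$'' $\iff$ ``$R$ uniquely $p$-divisible''. One direction is immediate: if $p$ is invertible in the ring $R$, then multiplication by $p$ on the underlying abelian group is the inverse-of-$p^{-1}$ map, which is a bijection, so $R$ is uniquely $p$-divisible. For the converse, suppose $R$ is uniquely $p$-divisible. Unique divisibility means multiplication by $p$ on $R$ is bijective; in particular there is a unique element $u \in R$ with $pu = 1$ (unique divisibility of the element $1$). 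This $u$ is then a two-sided inverse of $p$ in the ring, since $R$ is commutative and $p \cdot u = 1$ forces $u$ to be the multiplicative inverse of $p \cdot 1 \in R$. Hence $p$ is invertible in $R$.

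I expect the only genuinely delicate point to be keeping straight the two notions of ``$p$-divisibility'': the set $I$ in \cref{def:h-characteristic} is defined via unique $p$-divisibility of the coefficient group $h_*(\mathrm{pt})$, which here is $R$ as an abelian group, and I must make sure that for a ring of characteristic $0$ this abelian-group condition really is the same as ring-theoretic invertibility of $p$. Characteristic zero guarantees $R$ has no $p$-torsion issues obstructing the element $u$ from being a genuine unit (the map $\cdot p$ being injective is automatic once we know $\chara(R)=0$ rules out $p = 0$ in $R$), so the ``unique'' in unique divisibility is what upgrades mere divisibility to invertibility. Once this single-prime equivalence is in hand, the multiplicative reduction assembles the full statement. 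In brief:

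\begin{proof}[Proof sketch]
Since $\chara(R) = 0$, the group $R = h_*(\mathrm{pt})$ is not torsion, so $S = \ZZ[I^{-1}]$ with $I$ the set of primes $p$ for which $R$ is uniquely $p$-divisible. Write $k = \pm\prod_i p_i$ as a product of (not necessarily distinct) primes. Both $R$ and $S$ are rings in which an integer is invertible if and only if each of its prime factors is, so it suffices to treat $k = p$ a prime. If $p$ is invertible in $R$, then multiplication by $p$ is bijective on $R$, so $R$ is uniquely $p$-divisible and $p \in I$, whence $p$ is invertible in $\ZZ[I^{-1}] = S$. Conversely, if $p$ is invertible in $S$ then $p \in I$, so $R$ is uniquely $p$-divisible; the unique solution $u$ of $pu = 1$ is then a multiplicative inverse of $p$ in $R$, so $p$ is invertible in $R$.
\end{proof}
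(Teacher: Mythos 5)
Your proof is correct and follows essentially the same route as the paper: both reduce to the prime factorization of $k$ and translate invertibility of each prime factor into membership in the set $I$ of \cref{def:h-characteristic}, with your write-up merely making explicit the equivalence ``$p$ invertible in $R$ $\iff$ $R$ uniquely $p$-divisible'' that the paper asserts in passing. One small remark: uniqueness of the solution to $pu=1$ is not actually needed for the converse direction, since in a commutative ring any $u$ with $pu=1$ is already a multiplicative inverse of $p$.
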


\begin{proof}
Since $R$ is of characteristic $0$, then $S$ is $\ZZ[I^{-1}]$ for $I$ the set of primes $p$ such that $R$ is uniquely $p$-divisible. Let $k\in \ZZ$ be invertible in $R$, and let $p_1^{\alpha_1}\dots p_n^{\alpha_n}$ be the decomposition in prime numbers of $k$. Since $k$ is invertible, then $p_i$ is invertible too.
That implies that $R$ is uniquely $p_i$-divisible, so $p_i\in I$, and therefore, $k=p_1^{\alpha_1}\dots p_n^{\alpha_n}$ is invertible in $S$. Using a similar argument, one can prove that if $k\in \ZZ$ is invertible in $S$, then $k$ is invertible in $R$ too.
\end{proof}

%%%%%%%%%%%%%%%%%%%%%%%%%%%%%%%%%%%%%%%%%%
\begin{prop}\label{prop:strongly_iff_perfect}
%%%%%%%%%%%%%%%%%%%%%%%%%%%%%%%%%%%%%%%%%%
Let $R$ be a commutative ring of characteristic $0$ and $S$ be the associated $HR$-characteristic group. For a group $G$, the following are equivalent:
\begin{enumerate}
\item $G$ is strongly $R$-perfect
\item $G$ is strongly $S$-perfect.
\item $G$ is $S$-perfect.
\end{enumerate}
\end{prop}

\begin{proof}
Since $S$ is torsion free by construction (it is a subring of the rational numbers $\mathbb Q$), the equivalence between (2) and (3) is obvious.

We are thus left to prove that (1) and (2) are equivalent. Broto, Levi, and Oliver provide in \cite[Lemma A.3]{Broto2021} a characterization of strongly perfect groups over a ring of characteristic zero. Each element of the group $G$ must be annihilated by an integer that is invertible in the ring. We conclude then by Lemma~\ref{lem:R_A_same_invertibles} since $R$ and $S$ contain the same invertible integers.
\end{proof}
%%%%%%%%%%%%%%%%%%%%%%%%%%%%%%%%%%%%%%%%%%
\begin{exm}
%%%%%%%%%%%%%%%%%%%%%%%%%%%%%%%%%%%%%%%%%%
We consider the ring $R=\QQ\times \ZZ/p$ for some prime number $p$, as in \cite[Example~A.6]{Broto2021} and the associated characteristic group $S=\ZZ_{(p)}$. The Pr\" ufer group $\ZZ_{p^{\infty}}$ is $R$-perfect but not strongly $R$-perfect, and therefore not $S$-perfect. In particular, the plus construction $(B\ZZ_{p^{\infty}})^{+R}$ cannot kill the fundamental group. In fact this classifying space is local, $(B\ZZ_{p^{\infty}})^{+R} \simeq B\ZZ_{p^{\infty}}$.
\end{exm}

In Lemma~\ref{lem:propertyplusconstruction} we saw that the plus construction induces an epimorphism on fundamental groups, we can do better and say something about the kernel.

%%%%%%%%%%%%%%%%%%%%%%%%%%%%%%%%%%%%%%%%%%
\begin{prop}
\label{prop:perfectkernel}
%%%%%%%%%%%%%%%%%%%%%%%%%%%%%%%%%%%%%%%%%%
Let $h$ be a connective homology theory, and $S$ be the associated $h$-characteristic group. For any connected space $X$, the kernel of $\pi_1(X) \rightarrow \pi_1 (X^{+ h})$ is $S$-perfect. Moreover, if $h$ is ordinary homology theory with coefficients in a ring $R$ with unit, the kernel of $\pi_1(X) \rightarrow \pi_1 (X^{+ h})$ is:
\begin{itemize}
    \item strongly $R$-perfect if $\chara(R)=0$; or
    \item $R$-perfect if $\chara(R)\not=0$.
\end{itemize}
\end{prop}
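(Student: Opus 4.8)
The plan is to read the kernel off the fibration sequence produced by the acyclization functor and to transport perfectness from the acyclic fibre onto the kernel. Write $q\colon X \to X^{+h}$ for the plus construction and let $F = \Acy^h X$ be its homotopy fibre, so that $F \to X \xrightarrow{q} X^{+h}$ is a fibration sequence (Definition~\ref{def:acyclic}, Remark~\ref{rem:acyclization}). Being the $h$-acyclization, $F$ is $h$-acyclic, hence $S$-acyclic by Theorem~\ref{thm:homology_decomposition_ring}; in particular $\tilde H_*(F;S)=0$. Since $q$ is surjective on $\pi_1$ by Lemma~\ref{lem:propertyplusconstruction}, the fibre $F$ is connected and the long exact sequence of homotopy groups exhibits the kernel $P:=\ker(\pi_1(X)\to\pi_1(X^{+h}))$ as the image of $i_*\colon \pi_1(F)\to\pi_1(X)$, that is, as the quotient $\pi_1(F)/\mathrm{im}(\partial)$ of $\pi_1(F)$ by the image of the boundary map out of $\pi_2(X^{+h})$.

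First I would show that $\pi_1(F)$ is $S$-perfect. By the Hurewicz theorem and universal coefficients, $H_1(F;S)\cong H_1(F;\ZZ)\otimes S\cong \pi_1(F)^{\mathrm{ab}}\otimes S$, and this vanishes because $F$ is $S$-acyclic; so $\pi_1(F)$ is $S$-perfect in the sense of Definition~\ref{exm:stronglyperfect} and Remark~\ref{rem:abuse}. Next, as $P$ is a quotient of $\pi_1(F)$, the induced map on abelianizations $\pi_1(F)^{\mathrm{ab}}\twoheadrightarrow P^{\mathrm{ab}}$ remains surjective after applying the right-exact functor $-\otimes S$; since its source is zero, so is $P^{\mathrm{ab}}\otimes S=H_1(P;S)$. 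Hence $P$ is $S$-perfect, which is the first assertion.

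For the refinements when $h=HR$, I would split on the characteristic. If $\chara(R)=0$, then $P$ is $S$-perfect by the above, and Proposition~\ref{prop:strongly_iff_perfect} identifies $S$-perfect with strongly $R$-perfect, giving the claim at once. If $\chara(R)\neq 0$, then $R$ is torsion as an abelian group and $F$ is literally $R$-acyclic, so the same Hurewicz and universal-coefficient computation gives $H_1(F;R)=\pi_1(F)^{\mathrm{ab}}\otimes R=0$, and the quotient argument yields $H_1(P;R)=0$, i.e.\ $P$ is $R$-perfect.

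The step I expect to carry the real weight is the characteristic-zero case. A naive attempt to prove strong $R$-perfectness directly would require $\mathrm{Tor}(P^{\mathrm{ab}},R)=0$; although $\mathrm{Tor}(\pi_1(F)^{\mathrm{ab}},R)=0$ does follow from $H_2(F;R)=0$ by universal coefficients, $\mathrm{Tor}$ is not right exact, so this vanishing need not transfer along the surjection $\pi_1(F)\twoheadrightarrow P$. Routing the argument through $S$-perfectness and then invoking Proposition~\ref{prop:strongly_iff_perfect} is exactly what sidesteps this obstruction; beyond it, the only inputs are the $h$-acyclicity of the fibre of the plus construction and the elementary right-exactness manipulations above.
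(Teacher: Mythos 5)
Your proposal is correct and follows essentially the same route as the paper's proof: the fibration sequence of the acyclization, Hurewicz plus universal coefficients to show that $\pi_1$ of the $h$-acyclic fibre is $S$-perfect, right exactness of abelianization and $-\otimes S$ to pass to its quotient (the kernel), and \cref{prop:strongly_iff_perfect} for the characteristic-zero case. The only divergence is minor and welcome: where the paper handles $\chara(R)\neq 0$ by again citing \cref{prop:strongly_iff_perfect} (which is stated only for characteristic zero), you argue directly that $H_1(F;R)=\pi_1(F)^{\mathrm{ab}}\otimes R=0$ and pass to the quotient, and your closing remark correctly pinpoints why strong $R$-perfectness cannot be obtained the same way (failure of right exactness of $\operatorname{Tor}$), which is exactly the reason the paper routes through $S$.
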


\begin{proof}
    We know from Bousfield's Theorem~\ref{thm:homology_decomposition_ring} that $X^{+ h} \simeq X^{+ S}$. Therefore, using \cref{prop:strongly_iff_perfect}, it is enough to prove that the kernel of $\pi_1(X)\rightarrow \pi_1(X^{+S})$ is $S$-perfect.
    From the long exact sequence in homotopy associated to the fibration sequence $\Acy^S X \to X \to X^{+ S}$, we see that the kernel of the induced map $\pi_1(X) \rightarrow \pi_1 (X^{+ S})$ is the image of $\pi_1 (\Acy^S X)$. But $H_1(\Acy^S X; S) \cong H_1(\Acy^S X; \ZZ) \otimes S \cong \left(\pi_1 (\Acy^S X)\right)_{ab} \otimes S$ is zero since $\Acy^S X$ is $HS$-acyclic. Hence $\pi_1 (\Acy^S X)$ is $S$-perfect and, by right exactness of abelianization and the tensor product, so is any quotient.

    When $h$ is ordinary homology theory with coefficients in a ring $R$, the result holds by \cref{prop:strongly_iff_perfect}.
\end{proof}

We extract as a direct corollary the following characterization of $h$-perfect groups.
%%%%%%%%%%%%%%%%%%%%%%%%%%%%%%%%%%%%%%%%%%
\begin{cor}
\label{cor:hperfectisSperfect}
%%%%%%%%%%%%%%%%%%%%%%%%%%%%%%%%%%%%%%%%%%
    Let $h$ be a connective homology theory, and $S$ be the associated $h$-characteristic group. A group $G$ is $h$-perfect if and only if $G$ is $S$-perfect.
\end{cor}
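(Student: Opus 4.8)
The plan is to apply \cref{prop:perfectkernel} to the space $X=K(G,1)$ and to translate the topological condition of \cref{def:h-perfect_group} into the algebraic one. By Bousfield's \cref{thm:homology_decomposition_ring} I may replace $h$ by ordinary homology with coefficients in the characteristic group $S$, so that $K(G,1)^{+h}\simeq K(G,1)^{+S}=\PP_A K(G,1)$ for $A$ a universal $HS$-acyclic space; thus $G$ is $h$-perfect precisely when $K(G,1)^{+S}$ is simply connected. Write $q\colon K(G,1)\to K(G,1)^{+S}$ for the plus-construction map, $Q=\pi_1(K(G,1)^{+S})$, which is a quotient of $G$ by \cref{lem:propertyplusconstruction}, and $N\unlhd G$ for its kernel. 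The forward implication is then immediate: if $G$ is $h$-perfect, then $Q$ is trivial, so $N=G$, and since $N$ is the kernel of $\pi_1(X)\to\pi_1(X^{+h})$, \cref{prop:perfectkernel} guarantees that it is $S$-perfect; hence $G=N$ is $S$-perfect.

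For the converse I must show that $G$ being $S$-perfect forces $Q=1$, equivalently $N=G$. Two facts combine here. First, $q$ is an $HS$-equivalence by \cref{lem:propertyplusconstruction}, so $H_1(K(G,1)^{+S};S)\cong H_1(K(G,1);S)=H_1(G;S)=0$; since $H_1(K(G,1)^{+S};S)\cong Q_{\mathrm{ab}}\otimes S$, this shows that $Q$ is itself $S$-perfect. Second, \cref{prop:perfectkernel} shows $N$ is $S$-perfect, so $N$ is contained in the maximal $S$-perfect normal subgroup of $G$, which is all of $G$ by hypothesis. The content still missing is the reverse inclusion, namely that the whole maximal $S$-perfect radical is killed by $q$. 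I would therefore aim to identify $N$ with the maximal $S$-perfect normal subgroup of $G$: one inclusion is \cref{prop:perfectkernel}, and for the other I would use idempotency of $\PP_A$ together with the $A$-nullity of $K(G,1)^{+S}$ to argue that the fundamental group of any $A$-null space has no nontrivial $S$-perfect normal subgroup. Applying this to $Y=K(G,1)^{+S}$, whose $\pi_1=Q$ is $S$-perfect by the first fact, then forces $Q=1$.

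I expect the main obstacle to be exactly this maximality statement, since \cref{prop:perfectkernel} yields only that the kernel is \emph{some} $S$-perfect normal subgroup, while the converse needs that it is the \emph{largest} one. Concretely, the delicate point is to prove that an $A$-null space cannot carry a nontrivial $S$-perfect normal subgroup in its fundamental group; here I would pass to the first Postnikov section $Y\to K(Q,1)$ and show it remains $A$-null by exploiting that $A$ is $HS$-acyclic (so that maps out of $A$ see only the homology that $q$ has trivialized), reducing the claim to the case of an aspherical target. This upgrade from the single easy inclusion of \cref{prop:perfectkernel} to the full equality $N=G$ is what carries the converse, and it is where the essential homotopy-theoretic input, rather than pure bookkeeping, is required.
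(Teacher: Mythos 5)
Your ``only if'' direction is correct and is exactly the paper's argument: apply \cref{prop:perfectkernel} to $X=K(G,1)$, noting that $h$-perfectness of $G$ means the kernel of $\pi_1(K(G,1))\to\pi_1(K(G,1)^{+h})$ is all of $G$. You are also right that the ``if'' direction is where the real content lies, since \cref{prop:perfectkernel} only says the kernel $N$ is \emph{some} $S$-perfect normal subgroup, while the converse needs that the plus construction kills the \emph{maximal} one. The gap is that your plan for this maximality statement does not close; it only relocates the problem. First, the Postnikov step ($Y$ $A$-null implies $K(Q,1)$ $A$-null) cannot be run by obstruction theory from the $HS$-acyclicity of $A$: lifting a map $A\to K(Q,1)$ through $Y\to K(Q,1)$ meets obstructions in $H^{n+1}(A;\pi_n\widetilde Y)$, and the homotopy groups of the universal cover of $Y=\PP_A K(G,1)$ need not be $S$-modules, so $HS$-acyclicity of $A$ gives no control over them. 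Second, and more fundamentally, the aspherical case you reduce to is not easier --- it is the same statement. Since $\map_*(A,K(Q,1))$ is homotopy discrete with $\pi_0=\Hom(\pi_1(A),Q)$, the claim ``$K(Q,1)$ is not $A$-null when $Q\neq 1$ is $S$-perfect'' says precisely that some $HS$-acyclic space admits a homotopically nontrivial map to $K(Q,1)$; neither idempotency of $\PP_A$ nor $A$-nullity of $Y$ produces such a map (applying $\PP_A$ to the $A$-null space $Y$ simply returns $Y$ and yields no information about $Q$).

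The missing ingredient is genuine external input, and it is the one the paper itself invokes one theorem later: by Mislin--Peschke \cite[Lemma~2.16(iv)]{Mislin2001}, quoted in the proof of \cref{thm:elementaryproperties}, the $HS$-plus construction kills the maximal $S$-perfect normal subgroup of the fundamental group. (Unwound, its content is the Quillen/Broto--Levi--Oliver cell attachment: an $S$-perfect group $Q$ --- the distinction with strongly $S$-perfect evaporates for the characteristic group, by \cref{prop:strongly_iff_perfect} and \cref{rem:abuse} --- admits an acyclic map $K(Q,1)\to Z$ with $Z$ simply connected, whose $HS$-acyclic homotopy fiber has fundamental group surjecting onto $Q$; that is exactly the nontrivial map out of an acyclic space that your argument lacks.) Granting that lemma, your converse finishes in one line: if $G$ is $S$-perfect, its maximal $S$-perfect normal subgroup is $G$ itself, so $K(G,1)^{+S}$, which agrees with $K(G,1)^{+h}$ by \cref{thm:homology_decomposition_ring}, is simply connected, i.e.\ $G$ is $h$-perfect. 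Note that the paper's own one-line proof of the corollary is equally silent on this point, but its reliance on \cite{Mislin2001} elsewhere shows this is the intended source; I would replace your last two paragraphs by that citation.
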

\begin{proof}
    This result holds directly by applying \cref{prop:perfectkernel} to \cref{def:h-perfect_group}
\end{proof}

%%%%%%%%%%%%%%%%%%%%%%%%%%%%%%%%%%%%%%%%%%%%%%%%%%%%%%%%%%%%%%%
%%%%%%%%%%%%%%%%%%%%%%%%%%%%%%%%%%%%%%%%%%%%%%%%%%%%%%%%%%%%%%%
%%%%%               relative plus                        %%%%%
%%%%%%%%%%%%%%%%%%%%%%%%%%%%%%%%%%%%%%%%%%%%%%%%%%%%%%%%%%%%%%%
%%%%%%%%%%%%%%%%%%%%%%%%%%%%%%%%%%%%%%%%%%%%%%%%%%%%%%%%%%%%%%%
\section{Relative plus constructions}
%%%%%%%%%%%%%%%%%%%%%%%%%%%%%%%%%%%%%%%%
%%%%%%%%%%%%%%%%%%%%%%%%%%%%%%%%%%%%%%%%
Relative versions of Quillen's plus construction were already present in \cite{Quillen1971}. This plus construction for a space $X$ relative to a perfect subgroup $H\unlhd \pi_1(X)$ is a map \mbox{$q\colon X\to X^+_H$} such that $H_*(q;\ZZ)$ is an isomorphism and $\pi_1(q)\colon \pi_1(X)\to \pi_1(X^+_H)$ is an epimorphism with kernel $\ker(\pi_1(q))=H$. A possible, and brief, description is the following. We consider a covering space $p\colon \widetilde X\to X$ that corresponds to $H$, this is, $\pi_1(\widetilde X)\cong H$. Then, consider $\widetilde X^+$ the space obtained from $\widetilde X$ by attaching $2$- and $3$-dimensional cells to \emph{kill} the fundamental group of $\widetilde X$ without changing its homology. We can see $\widetilde X$ as a subspace of $\widetilde X^+$. Then, we define $X^+_H$ to be the homotopy pushout, see \cite[Section~4.2]{Hatcher2002}:
\[
\begin{tikzcd}
\widetilde X \arrow[r, hook] \arrow[d] & \widetilde X^+ \arrow[d] \\
X \arrow[r, "q"]                   & X^+_H.            
\end{tikzcd}
\]

Recently, Broto, Levi and Oliver \cite{Broto2021} introduced a \emph{relative} plus construction for homology with coefficients in a ring $R$. The relativity refers to a non-maximal (strongly) $R$-perfect subgroup.

%%%%%%%%%%%%%%%%%%%%%%%%%%%%%%%%%%%%%%%%%%
\begin{defn}
%%%%%%%%%%%%%%%%%%%%%%%%%%%%%%%%%%%%%%%%%%
[{\cite{Broto2021}}]\label{def:BLO+const}
    Fix a commutative ring $R$ with unit, a pointed space $X$, and a normal subgroup $H\unlhd \pi_1(X)$. An \emph{$R$-plus construction for $(X,H)$}, if it exists, is a space $X^{+R}_H$ together with a map $q\colon X\to X^{+R}_H$, such that $\pi_1(q)$ is an epimorphism with kernel $H$, and $H_*(q;N)$ is an isomorphism for each $R[\pi_1(X)/H]$-module $N$. 
\end{defn}

%%%%%%%%%%%%%%%%%%%%%%%%%%%%%%%%%%%%%%%%%%
\begin{thm}[{\cite[Proposition A.5]{Broto2021}}]
%%%%%%%%%%%%%%%%%%%%%%%%%%%%%%%%%%%%%%%%%%
Let $R$ be a commutative ring with unit, let $X$ be a connected CW complex, and let $H\unlhd \pi_1(X)$ be a normal subgroup. Then there is an $R$-plus construction for $(X,H)$ (see~\ref{def:BLO+const}) if and only if either,
\begin{itemize}
    \item $\chara(R)\not =0$ and $H$ is $R$-perfect, or
    \item $\chara(R)=0$ and $H$ is strongly $R$-perfect.
\end{itemize}
\end{thm}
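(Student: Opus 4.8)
The plan is to reformulate \cref{def:BLO+const} in terms of the covering space $p\colon\widetilde X\to X$ classified by $H$, so that $\pi_1\widetilde X\cong H$ and $p$ is a regular cover with deck group $Q:=\pi_1(X)/H$, and then to recognise the resulting condition as $S$-perfectness of $H$, where $S$ is the $HR$-characteristic group of \cref{def:h-characteristic}. The first step is to show that an $R$-plus construction for $(X,H)$ exists if and only if there is an $R$-homology equivalence $\widetilde X\to Y$ onto a simply connected space $Y$ killing $\pi_1\widetilde X=H$. Given such a $q\colon X\to X^{+R}_H$, the composite $\widetilde X\to X\xrightarrow{q} X^{+R}_H$ lifts to the universal cover $\widetilde{X^{+R}_H}$, because $H$ maps trivially to $\pi_1(X^{+R}_H)=Q$; choosing $N=R[Q]$ in \cref{def:BLO+const} and invoking Shapiro's lemma (so that $H_*(-;R[Q])$ computes the homology of the $Q$-cover) shows this lift $\widetilde X\to\widetilde{X^{+R}_H}$ to be an $R$-homology equivalence onto a simply connected space.

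For the converse I would assemble $X^{+R}_H$ as the homotopy pushout of $X\xleftarrow{p}\widetilde X\to Y$, exactly the square used by Broto, Levi and Oliver and the one produced functorially by \cref{prop:nullification_respect_identity}. Van Kampen identifies $\pi_1(X^{+R}_H)$ with $\pi_1(X)/H=Q$ and $\pi_1(q)$ with the quotient epimorphism of kernel $H$. To verify the coefficient condition I would first upgrade the hypothesis: an $R$-homology equivalence has $HR$-acyclic cofibre, so its reduced chains become acyclic after $-\otimes^{\mathbf{L}}_{\ZZ}R$, hence after $-\otimes^{\mathbf{L}}_{\ZZ}N$ for every $R$-module $N$; thus $\widetilde X\to Y$ is already an isomorphism on $H_*(-;N)$ for all $R$-modules $N$. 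Since any $R[Q]$-module restricts to the constant system along $\widetilde X$ (where $H$ acts trivially) and $Y$ is simply connected, excision for the pushout reduces $H_*(q;N)$ to this isomorphism, for every $R[Q]$-module $N$.

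With this reformulation the heart of the matter is the homotopy-theoretic equivalence: a simply connected $R$-homological quotient of $\widetilde X$ killing $H$ exists if and only if $H$ is $S$-perfect. By Bousfield's \cref{thm:homology_decomposition_ring} an $R$-homology equivalence is the same as an $HS$-equivalence, so I may argue with $S$ throughout. For existence, if $H$ is $S$-perfect then it is $h$-perfect by \cref{cor:hperfectisSperfect}; hence \cref{lem:propertyplusconstruction} and \cref{prop:perfectkernel} show that the absolute plus construction $\widetilde X\to\widetilde X^{+R}$ kills all of $\pi_1\widetilde X=H$, so $\widetilde X^{+R}$ is simply connected and provides the desired $Y=\widetilde X^{+R}$. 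For necessity, any $HS$-equivalence $\widetilde X\to Y$ with $Y$ simply connected gives $H_1(\widetilde X;S)\cong H_1(Y;S)=0$, i.e. $H_{\mathrm{ab}}\otimes S=0$, which is precisely $S$-perfectness of $H$.

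It remains to convert \emph{$S$-perfect} into the two advertised conditions. In characteristic zero $S=\ZZ[I^{-1}]$ is torsion free and \cref{prop:strongly_iff_perfect} identifies $S$-perfect with strongly $R$-perfect; when $\chara(R)\neq0$ the group $S=\oplus_{p\in J}\ZZ/p$ is torsion and, directly from \cref{def:h-characteristic}, an abelian group $M$ satisfies $M\otimes R=0$ if and only if $M\otimes\ZZ/p=0$ for all $p\in J$, so $R$-perfect and $S$-perfect coincide. I expect the main obstacle to be the coefficient bookkeeping that makes the whole problem depend only on $H$: pinning down, via Shapiro's lemma and the derived tensor argument, that the clause ``for all $R[Q]$-modules $N$'' in \cref{def:BLO+const} is equivalent to a single plain $R$-homology equivalence of covering spaces, and checking the local-coefficient excision on the pushout. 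Once this is in place the homotopical content is exactly \cref{prop:perfectkernel}, and the characteristic-dependent dichotomy is the elementary translation above.
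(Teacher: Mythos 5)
You should first be aware that the paper does not prove this statement at all: it is quoted from \cite[Proposition~A.5]{Broto2021}, whose ``if'' direction is established by Quillen-style attachment of $2$- and $3$-cells. Your route --- reformulating \cref{def:BLO+const} via the covering space $\widetilde X$, translating everything into $S$-coefficients through Bousfield's \cref{thm:homology_decomposition_ring}, and then using the paper's nullification machinery --- is therefore genuinely different, and most of it is sound. In particular, the ``only if'' direction (lift to the universal cover, Shapiro's lemma with $N=R[Q]$, universal coefficients, then \cref{prop:strongly_iff_perfect} in characteristic zero and the elementary $p\mid\chara(R)$ dictionary otherwise) is correct, and your derived-tensor plus Mayer--Vietoris bookkeeping in the converse is essentially a proof of \cref{lem:auxiliar_local_coef}, which the paper instead cites and uses in \cref{thm:universa_plus_construction_local_coeff}.

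There is, however, a genuine gap in your sufficiency argument, at the step where you claim that $S$-perfectness of $H$ together with \cref{cor:hperfectisSperfect}, \cref{lem:propertyplusconstruction} and \cref{prop:perfectkernel} shows that $\widetilde X^{+R}$ is simply connected. Those results give the containment in the wrong direction: \cref{prop:perfectkernel} says the kernel of $\pi_1(\widetilde X)\to\pi_1(\widetilde X^{+R})$ is \emph{some} $S$-perfect subgroup of $H$, not that it is all of $H$, and \cref{cor:hperfectisSperfect} (via \cref{def:h-perfect_group}) concerns $K(H,1)$, not an arbitrary space with fundamental group $H$; nothing you cite rules out that $\pi_1(\widetilde X^{+R})$ is a nontrivial $S$-perfect group. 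The missing ingredient is the maximality statement --- the $HS$-plus construction kills the \emph{maximal} $S$-perfect subgroup of the fundamental group --- which is Mislin and Peschke's \cite[Lemma~2.16(iv)]{Mislin2001}, precisely the input the paper invokes in the proof of \cref{thm:elementaryproperties}; this is the hard homotopical content (morally equivalent to the existence statement you are proving), so it cannot be recovered from the epimorphism property and \cref{prop:perfectkernel} alone. The cleanest repair is to quote \cref{thm:elementaryproperties} and \cref{thm:universa_plus_construction_local_coeff} wholesale for the ``if'' direction: together they say that the pushout $X^{+R}_H$ of \cref{def:universal_plus_construction} is an $R$-plus construction in the sense of \cref{def:BLO+const} whenever $H$ is $S$-perfect, leaving only your ``only if'' argument and the algebraic translation, both of which stand.
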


They prove the ``if'' implication using Quillen's technique of attaching $2$- and $3$-dimen\-sional cells to kill the desired subgroup. With this definition, the plus construction is not unique, and, moreover, in contrast with the classical plus construction, it does not enjoy any universal property. Here is a very simple example illustrating the kind of problems that one can encounter.

%%%%%%%%%%%%%%%%%%%%%%%%%%%%%%%%%%%%%%%%%%
\begin{exm}
\label{exm:BLO-not-well-defined}
%%%%%%%%%%%%%%%%%%%%%%%%%%%%%%%%%%%%%%%%%%
    Let $R = \ZZ/3$ and $\RR P^2$ be the real projective plane. Notice that the suspension $\Sigma \RR P^2$ is simply-connected and $2$-torsion, hence $H\ZZ/3$-acyclic, so in particular $\widetilde H_*(\Sigma\RR P^2;\ZZ/3)=0$. 
    
    On the one hand, the identity $\id\colon \Sigma \RR P^2\to \Sigma\RR P^2$ is the plus construction given by Broto, Levi, and Oliver's proof. But, on the other hand, since $\Sigma\RR P^2$ is $H\ZZ/3$ acyclic, the nullification with respect to the universal $H\ZZ/3$-acyclic space is contractible, so \mbox{$\Sigma \RR P ^2\to \ast$} is also a plus construction in the sense of Definition~\ref{def:BLO+const}.
\end{exm}

In order to fix this issue we work in the arrow category. Just like classical treatments do, see Hatcher's presentation in \cite[p. 373-374]{Hatcher2002}, which we already mentioned in the introduction of the present section, and also \cite{Broto2021}, we use the covering space associated to the given subgroup $H$. This subgroup must be normal as our aim is to kill exactly this subgroup in the process and it must be perfect with respect to the homology theory we are using since we wish to obtain a homology equivalence.

%%%%%%%%%%%%%%%%%%%%%%%%%%%%%%%%%%%%%%%%%%
\begin{defn}\label{def:universal_plus_construction}
%%%%%%%%%%%%%%%%%%%%%%%%%%%%%%%%%%%%%%%%%%
    Let $h$ be a connective homology theory and $S$ be the associated characteristic group. Let $X$ be a connected space and $H\unlhd \pi_1(X)$ a normal $S$-perfect subgroup of $\pi_1(X)$. Let $p\colon \widetilde X\to X$ be a covering space corresponding to $H$. The \emph{relative plus construction} of $X$ relative to $H$, that we will denote by $p^{+h}$, is the $\id_A$-nullification of $p$, where $A$ is a universal $HS$-acyclic space.
\end{defn}
%%%%%%%%%%%%%%%%%%%%%%%%%%%%%%%%%%%%%%%%%%
\begin{rmk}\label{rem:universal_plus_construction}
%%%%%%%%%%%%%%%%%%%%%%%%%%%%%%%%%%%%%%%%%%
Proposition~\ref{prop:nullification_respect_identity} tells us that this nullification in the arrow category can be obtained as a homotopy pushout:
    \[
    \begin{tikzcd}
    \widetilde X \arrow[r, hook] \arrow[d] & \widetilde X^{+h} \arrow[d] \\
    X \arrow[r, "q"]                   & X^{+h}_H.               
    \end{tikzcd}
    \]
We also remark that \cref{def:universal_plus_construction} is also valid for a homology theory $h$ and any $h$-perfect group~$H$.
\end{rmk} 

We have to relate this construction with the way Broto, Levi, and Oliver do it. This means that we have to verify that the map $q\colon X \to X^{+h}_H$ enjoys all properties of a relative plus construction as defined in \cref{def:BLO+const}. We start with the more elementary parts and keep the local coefficients for the end.

%%%%%%%%%%%%%%%%%%%%%%%%%%%%%%%%%%%%%%%%
\begin{thm}
\label{thm:elementaryproperties}
%%%%%%%%%%%%%%%%%%%%%%%%%%%%%%%%%%%%%%%%
    Let $h$ be a connective homology theory,  and $S$ be the associated $h$-charac\-teristic group. Let $X$ be a connected space and $H\unlhd\pi_1(X)$ a normal $S$-perfect subgroup of~$X$. The relative plus construction $q\colon X\to X^{+h}_H$ induces an isomorphism in homology $h_*(q)\colon h_*(X)\to h_*(X^{+h}_H)$ and an epimorphism on the fundamental group $\pi_1(q)\colon \pi_1(X)\to \pi_1(X^{+h}_H)$ with kernel $\ker(\pi_1(q))=H$.
\end{thm}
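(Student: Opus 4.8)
The plan is to read off both statements from the homotopy pushout description of $q$ recorded in \cref{rem:universal_plus_construction}, which presents the relative plus construction as the cobase change of the absolute plus construction $\widetilde X \to \widetilde X^{+h} = \PP_A \widetilde X$ along the covering map $p\colon \widetilde X \to X$:
\[
\begin{tikzcd}
\widetilde X \arrow[r] \arrow[d, "p"'] & \widetilde X^{+h} \arrow[d] \\
X \arrow[r, "q"] & X^{+h}_H.
\end{tikzcd}
\]
I would treat the homology isomorphism and the fundamental group separately, deferring the one genuinely new input to a lemma.

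For the homology statement the only input is \cref{lem:propertyplusconstruction}: the top horizontal arrow $\widetilde X \to \widetilde X^{+h}$ is an $h$-equivalence, so its homotopy cofiber is $h$-acyclic. In a homotopy pushout the two horizontal homotopy cofibers agree, so the cofiber of $q$ is $h$-acyclic as well, which is precisely the assertion that $h_*(q)$ is an isomorphism. This requires no computation: it is the comparison of the associated Mayer--Vietoris sequences, equivalently left properness of the ambient model structure.

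For the fundamental group I would apply van Kampen to the homotopy pushout of connected spaces, obtaining $\pi_1(X^{+h}_H) \cong \pi_1(X) *_{\pi_1(\widetilde X)} \pi_1(\widetilde X^{+h})$. The covering $p$ identifies $\pi_1(\widetilde X) \cong H$ with the given normal subgroup $H \unlhd \pi_1(X)$. Granting that $\widetilde X^{+h}$ is simply connected, the amalgamated pushout collapses to $\pi_1(X)/\langle\langle H\rangle\rangle$; since $H$ is already normal its normal closure is $H$ itself, so $\pi_1(q)$ is the quotient epimorphism $\pi_1(X)\to \pi_1(X)/H$, which is surjective with kernel exactly $H$.

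The main obstacle, which I would isolate as a preliminary lemma, is the simple connectivity of $\widetilde X^{+h}$. Here $\pi_1(\widetilde X)=H$ is $S$-perfect, hence $h$-perfect by \cref{cor:hperfectisSperfect}. For the aspherical model this is exactly \cref{def:h-perfect_group}, namely that $K(H,1)^{+h}$ is simply connected, i.e.\ the plus construction kills all of the $h$-perfect group $H$ on $\pi_1$. To transfer this from $K(H,1)$ to the general space $\widetilde X$ I would use that the effect of $\PP_A$ on $\pi_1$ depends only on $\pi_1$: the canonical $\pi_1$-isomorphism $\widetilde X \to K(H,1)$ together with \cref{prop:perfectkernel} (which already shows the kernel of $\pi_1(\widetilde X)\to \pi_1(\widetilde X^{+h})$ is an $S$-perfect normal subgroup of $H$) forces that kernel to be the maximal $S$-perfect normal subgroup, and since $H$ is itself $S$-perfect this maximal subgroup is all of $H$. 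I expect the delicate point to be precisely this $\pi_1$-only dependence, i.e.\ ruling out that a nontrivial $S$-perfect quotient of $H$ survives in the $A$-null space $\widetilde X^{+h}$; everything else in the argument is formal consequence of the pushout square.
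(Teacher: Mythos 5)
Your overall strategy coincides with the paper's: read off both claims from the homotopy pushout of \cref{rem:universal_plus_construction}, obtain the homology isomorphism by comparing the two horizontal homotopy cofibers, and obtain the fundamental group statement by van Kampen once $\widetilde X^{+h}$ is known to be simply connected. The homology argument and the van Kampen reduction are correct exactly as you wrote them, and they match the paper's proof.

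The gap is the step you yourself flagged: the simple connectivity of $\widetilde X^{+h}$, and your proposed derivation does not close it. \cref{prop:perfectkernel} only tells you that the kernel of $\pi_1(\widetilde X)\to\pi_1(\widetilde X^{+h})$ is \emph{some} $S$-perfect subgroup of $H$; nothing in that proposition, nor in formal functoriality, forces it to be the \emph{maximal} one. In particular, the comparison map you invoke points the wrong way: functoriality along $\widetilde X \to K(H,1)$ produces a map $\pi_1(\widetilde X^{+h}) \to \pi_1\bigl(K(H,1)^{+h}\bigr) = 1$, and since this map has no reason to be injective, the simple connectivity of $K(H,1)^{+h}$ (which is what \cref{def:h-perfect_group} together with \cref{cor:hperfectisSperfect} gives you) says nothing about $\pi_1(\widetilde X^{+h})$. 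The claim that ``the effect of $\PP_A$ on $\pi_1$ depends only on $\pi_1$'' is indeed true when $A$ is a universal acyclic space, but it is a genuine theorem about plus constructions, not a formal consequence of anything established in the paper up to this point, and it is exactly the external input the paper's proof relies on: Mislin and Peschke's \cite[Lemma~2.16(iv)]{Mislin2001}, which states that the absolute $HS$-plus construction kills precisely the maximal $S$-perfect subgroup of the fundamental group. Applied to $\pi_1(\widetilde X)=H$, which is $S$-perfect, it yields $\pi_1(\widetilde X^{+h})=1$, after which the rest of your argument goes through verbatim. So your proof is structurally the paper's proof, but with its one non-formal ingredient asserted rather than proved or properly cited.
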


\begin{proof}
Let $p\colon \widetilde X\to X$ be a covering map corresponding to $H$. We know from Remark~\ref{rem:universal_plus_construction} that a homotopy pushout diagram describes the relative plus construction.
First, note that by Seifert-van Kampen, see, for example, the very general version \cite[Corollary~5.1]{DrorFarjoun2004}, $\pi_1(q)$ is an epimorphism with kernel $\ker\pi_1(q)=H$. The (absolute) plus construction for $HS$ kills the maximal $S$-perfect subgroup, by Mislin and Peschke's \cite[Lemma~2.16(iv)]{Mislin2001}. This is precisely $H$ since $h$-perfect groups coincide with $S$-perfect ones by \cref{cor:hperfectisSperfect}, see also \cite[Section~5]{Mislin2001}.

To conclude, we must prove that $h_*(q)$ is an isomorphism. This can be done by a direct computation from the long exact sequence in homology associated to the homotopy pushout, or just observe that the top horizontal homotopy cofiber is $HS$-acyclic since the (absolute) plus construction is a homology equivalence, therefore so is the weakly homotopy equivalent bottom horizontal homotopy cofiber.
\end{proof}

We have obtained at this point a homology equivalence $X \to X^{+ h}_H$ killing a chosen $S$-perfect normal subgroup $H \unlhd \pi_1(X)$, but, if $h$ is a homology theory with coefficients in a ring $R$, in order to deserve the name of a relative plus construction it should also be a homology equivalence for local coefficients.

%%%%%%%%%%%%%%%%%%%%%%%%%%%%%%%%%%%%%%%%%%%%%%
\begin{thm}\label{thm:universa_plus_construction_local_coeff}
%%%%%%%%%%%%%%%%%%%%%%%%%%%%%%%%%%%%%%%%%%%%%%
Let $R$ be a commutative ring with unit and $S$ be the associated $HR$-characteristic group. Let $X$ be a connected space and $H\unlhd\pi_1(X)$ an $S$-perfect subgroup of $\pi_1(X)$. The relative plus construction $q\colon X\to X^{+R}_H$ induces an isomorphism $H_*(q;N)\colon H_*(X;N)\to H_*(X^{+R}_H;N)$ in homology with coefficients in every $R[\pi_1(X)/H]$-module~$N$.
\end{thm}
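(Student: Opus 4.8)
The plan is to use the homotopy pushout description from Remark~\ref{rem:universal_plus_construction} together with the fact, already established in Theorem~\ref{thm:elementaryproperties}, that the top horizontal map $\widetilde X \hookrightarrow \widetilde X^{+h}$ is an $HS$-homology equivalence inducing the quotient on $\pi_1$ and killing exactly $H$. The key observation is that homology with local coefficients on $X^{+R}_H$ and on $X$ can be computed on the covering spaces, and that local coefficients in an $R[\pi_1(X)/H]$-module pull back along the quotient $\pi_1(X) \to \pi_1(X)/H = \pi_1(X^{+R}_H)$ to a system that is \emph{trivial} when restricted to the fibers of the covering $p\colon \widetilde X \to X$, precisely because those fibers correspond to $H$, which acts trivially on any $R[\pi_1(X)/H]$-module.

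First I would fix a covering $p\colon \widetilde X \to X$ corresponding to $H$, so that $\pi_1(\widetilde X) \cong H$ and $\pi_1(X)/H \cong \pi_1(X^{+R}_H)$. Given an $R[\pi_1(X)/H]$-module $N$, I would view it as a local coefficient system on $X$ via the projection $\pi_1(X) \to \pi_1(X)/H$; the point is that its restriction to $\widetilde X$ is a \emph{constant} coefficient system, since $H = \pi_1(\widetilde X)$ acts trivially on $N$. The homotopy pushout
\[
\begin{tikzcd}
\widetilde X \arrow[r, hook] \arrow[d, "p"'] & \widetilde X^{+R} \arrow[d] \\
X \arrow[r, "q"] & X^{+R}_H
\end{tikzcd}
\]
induces, for \emph{any} local system $N$ on the total space $X^{+R}_H$ pulled back to all four corners, a Mayer--Vietoris long exact sequence in homology with those coefficients. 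By exactness, $H_*(q;N)$ is an isomorphism if and only if the top map $\widetilde X \hookrightarrow \widetilde X^{+R}$ induces an isomorphism on homology with the pulled-back coefficients.

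The crux is therefore to show that $\widetilde X \hookrightarrow \widetilde X^{+R}$ is an isomorphism on homology with coefficients in $N$ regarded as a system on $\widetilde X^{+R}$. Now $\widetilde X^{+R}$ is the absolute plus construction of $\widetilde X$, which kills $H = \pi_1(\widetilde X)$ entirely, so $\widetilde X^{+R}$ is simply connected and the pulled-back system is constant equal to $N$. On $\widetilde X$ the system is also constant equal to $N$ by the argument above, and the inclusion is compatible with these trivializations. Thus the claim reduces to the statement that the absolute plus construction $\widetilde X \to \widetilde X^{+R}$ is an isomorphism on homology with \emph{constant} coefficients in the abelian group underlying $N$. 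This follows from Lemma~\ref{lem:propertyplusconstruction} (or Bousfield's Theorem~\ref{thm:homology_decomposition_ring}) once one knows that an $HS$-equivalence is an isomorphism on $H_*(-;N)$; since $N$ is an $R$-module and $S$ is the $HR$-characteristic group, an $HS$-acyclic space is $HN$-acyclic, so the $HS$-equivalence $\widetilde X \to \widetilde X^{+R}$ is automatically an $N$-homology equivalence.

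The main obstacle I anticipate is the careful bookkeeping of local coefficient systems across the pushout: one must verify that the pulled-back systems on $\widetilde X$ and $\widetilde X^{+R}$ genuinely become constant and that the four corner identifications are mutually compatible, so that the Mayer--Vietoris sequence can be set up with a single coherent coefficient system. The subtler point is justifying that an $HS$-acyclic space is $HN$-acyclic for an arbitrary $R[\pi_1(X)/H]$-module $N$; this rests on the fact that $N$, as an $R$-module, has its homological behaviour controlled by the characteristic group $S$ via Bousfield's \cref{thm:homology_decomposition_ring}, so that vanishing of reduced $S$-homology forces vanishing of reduced $N$-homology. I would treat this reduction as the technical heart of the argument and the Mayer--Vietoris comparison as the formal skeleton tying it together.
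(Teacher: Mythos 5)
Your proof is correct, but it follows a genuinely different route from the paper's. The paper does not use Mayer--Vietoris at all: it lifts $p^{+R}$ through the universal cover $E\to X^{+R}_H$, applies Chach\'olski's fibration theorem \cite[Theorem~3.4(I)]{Chacholski1997} to show that $\operatorname{Fib}(q)$ is $HS$-acyclic, observes that the square formed by the two covering maps $\widetilde X\to X$ and $E\to X^{+R}_H$ is a homotopy pullback (same free deck group), so that $\operatorname{Fib}(\widetilde q)\simeq\operatorname{Fib}(q)$ and hence $\widetilde q\colon\widetilde X\to E$ is an $R$-homology equivalence, and finally quotes Broto--Levi--Oliver's \cref{lem:auxiliar_local_coef} to convert this single untwisted equivalence of covers into an isomorphism on $H_*(-;N)$ for all $R[\pi_1(X)/H]$-modules $N$. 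You instead stay inside the pushout square of \cref{rem:universal_plus_construction}: excision/Mayer--Vietoris with the pulled-back local system reduces $H_*(q;N)$ to the top map, where the system becomes constant (trivial $H$-action on $N$ over $\widetilde X$, simple connectivity of $\widetilde X^{+R}$ on the right), leaving a constant-coefficient statement. In effect your Mayer--Vietoris comparison performs topologically exactly the bookkeeping that \cref{lem:auxiliar_local_coef} performs algebraically with complexes of free $R[\pi_1(X)/H]$-modules, so you avoid both Chach\'olski's theorem and the BLO lemma; the price is the algebraic claim you isolate at the end.

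That claim---every $HS$-acyclic space is $HN$-acyclic for every $R$-module $N$---is true, but it is the one place where your write-up is only a gesture, and \cref{thm:homology_decomposition_ring} alone does not deliver it: Bousfield's theorem compares $HN$ with its \emph{own} characteristic group $S_{HN}$, and one still has to relate $S_{HN}$ to $S=S_{HR}$, using the $R$-module structure on $N$ in an essential way (for distinct primes $p,q$, a Moore space $M(\ZZ/q,1)$ is $H\ZZ/p$-acyclic but not $H\ZZ/q$-acyclic, so the claim fails for arbitrary abelian groups $N$). The clean argument is by universal coefficients. If $\chara(R)=n\neq 0$, then $S=\oplus_{p\mid n}\ZZ/p$, so $HS$-acyclicity of $Z$ means $\widetilde H_*(Z;\ZZ)$ is uniquely $p$-divisible for all $p\mid n$, hence uniquely $n$-divisible; since $nN=0$, multiplication by $n$ is simultaneously an isomorphism and zero on $\widetilde H_*(Z;\ZZ)\otimes N$ and on $\operatorname{Tor}(\widetilde H_*(Z;\ZZ),N)$, so both vanish and $\widetilde H_*(Z;N)=0$. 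If $\chara(R)=0$, then $S=\ZZ[I^{-1}]$ with $I$ the set of primes invertible in $R$ (\cref{lem:R_A_same_invertibles}), so $HS$-acyclicity means $\widetilde H_*(Z;\ZZ)$ is $I$-torsion, while every $R$-module is uniquely $p$-divisible for each $p\in I$; again both universal-coefficient terms vanish. With this lemma supplied, your proof is complete and fully independent of \cref{lem:auxiliar_local_coef}.
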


To prove this theorem, we will use the following version of a lemma by Broto, Levi, and Oliver.

%%%%%%%%%%%%%%%%%%%%%%%%%%%%%%%%%%%%%%%%%%%%%%
\begin{lem}[{\cite[Lemma A.2]{Broto2021}}]\label{lem:auxiliar_local_coef}
%%%%%%%%%%%%%%%%%%%%%%%%%%%%%%%%%%%%%%%%%%%%%%
Let $R$ be a commutative ring with unit and \mbox{$f\colon X\to Y$} be a map between connected spaces inducing an epimorphism $\pi_1(f)\colon G \twoheadrightarrow \pi$ with kernel~$H$. 
Let $\widetilde X$ and $\widetilde Y$ be the covering spaces of $X$ and $Y$ with fundamental groups 
$H$ and $1$ respectively.
Assume that a covering map $\widetilde f \colon \widetilde X\to \widetilde Y$ is an $R$-homology equivalence. Then $H_*(f;N)$ is an isomorphism for each $R\pi$-module $N$.
\end{lem}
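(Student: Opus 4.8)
The plan is to reinterpret homology with local coefficients as the homology of an equivariant chain complex over the group ring, and then to run a homological-algebra argument that upgrades the single hypothesis on $\widetilde f$ to a statement about every coefficient module. Write $G=\pi_1(X)$, so that $\pi=G/H$ and the $R\pi$-module $N$ becomes an $RG$-module through $\pi_1(f)\colon G\twoheadrightarrow\pi$, with $H$ acting \emph{trivially}. Let $\widehat X\to X$ be the universal cover, with free deck action of $G$; the cover $\widetilde X\to X$ of the statement is then $\widehat X/H$, a free $\pi$-space whose chain complex $C_*(\widetilde X)$ is a bounded-below complex of free $\ZZ\pi$-modules. Because $H$ acts trivially on $N$, the identification $C_*(\widetilde X)\cong\ZZ\pi\otimes_{\ZZ G}C_*(\widehat X)$ together with associativity of the tensor product yields a natural isomorphism of complexes
\[
C_*(\widehat X)\otimes_{\ZZ G}N\;\cong\;C_*(\widetilde X)\otimes_{\ZZ\pi}N,
\]
so that $H_*(X;N)\cong H_*\bigl(C_*(\widetilde X)\otimes_{\ZZ\pi}N\bigr)$. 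Since $\widetilde Y$ is the universal cover of $Y$ with deck group $\pi$, the definition of local coefficients gives directly $H_*(Y;N)\cong H_*\bigl(C_*(\widetilde Y)\otimes_{\ZZ\pi}N\bigr)$, and under these identifications the map induced by $f$ is the one induced by $\widetilde f$.

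Next I would use that $\widetilde f$ is $\pi$-equivariant: the lift exists precisely because $\pi_1(f)(H)=1$, and it intertwines the two deck actions via the isomorphism $G/H\cong\pi$. Hence $\widetilde f_*\colon C_*(\widetilde X)\to C_*(\widetilde Y)$ is a map of bounded-below complexes of free $\ZZ\pi$-modules. Tensoring over $\ZZ$ with $R$ produces a map
\[
\phi\colon C_*(\widetilde X)\otimes_\ZZ R\longrightarrow C_*(\widetilde Y)\otimes_\ZZ R
\]
of bounded-below complexes of free $R\pi$-modules. Its homology, computed while forgetting the $\pi$-action, is $H_*(\widetilde X;R)\to H_*(\widetilde Y;R)$, which is an isomorphism by the hypothesis that $\widetilde f$ is an $R$-homology equivalence; as homology does not see the module structure, $\phi$ is a quasi-isomorphism of complexes of $R\pi$-modules.

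The crux is then purely homological: a quasi-isomorphism between bounded-below complexes of projective modules is a chain homotopy equivalence. Thus $\phi$ is a chain homotopy equivalence over $R\pi$, and chain homotopies are preserved by the additive functor $-\otimes_{R\pi}N$, so $\phi\otimes_{R\pi}N$ is again a quasi-isomorphism. Finally I would identify source and target: using $C_*(\widetilde X)\otimes_\ZZ R\cong C_*(\widetilde X)\otimes_{\ZZ\pi}R\pi$ for the free $\ZZ\pi$-complex $C_*(\widetilde X)$, associativity gives
\[
\bigl(C_*(\widetilde X)\otimes_\ZZ R\bigr)\otimes_{R\pi}N\;\cong\;C_*(\widetilde X)\otimes_{\ZZ\pi}N,
\]
and likewise for $\widetilde Y$. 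Hence $\phi\otimes_{R\pi}N$ is exactly the map $C_*(\widetilde X)\otimes_{\ZZ\pi}N\to C_*(\widetilde Y)\otimes_{\ZZ\pi}N$ computing $H_*(f;N)$, which is therefore an isomorphism.

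The step I expect to be the main obstacle is more bookkeeping than difficulty: keeping the module structures straight so that the local-coefficient homologies of $X$ and $Y$ are genuinely computed by the $\ZZ\pi$-complexes of the \emph{intermediate} cover $\widetilde X$ and the universal cover $\widetilde Y$ with the \emph{same} group $\pi$ acting — this is precisely what lets one $\pi$-equivariant map $\widetilde f$ control all coefficient systems $N$ simultaneously. The genuine engine, however, is the passage from an $R$-homology equivalence to a chain homotopy equivalence of free $R\pi$-complexes, since that is what promotes the hypothesis on the single coefficient $R$ to a conclusion valid for every $R\pi$-module.
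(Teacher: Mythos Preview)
The paper does not actually supply a proof of this lemma; it merely quotes the statement from \cite[Lemma~A.2]{Broto2021} and uses it as a black box in the proof of \cref{thm:universa_plus_construction_local_coeff}. So there is no in-paper argument to compare against.

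Your proof is correct and is the standard argument. The reduction of $H_*(X;N)$ and $H_*(Y;N)$ to the $\ZZ\pi$-equivariant chain complexes of the covers $\widetilde X$ and $\widetilde Y$ is exactly right, as is the observation that the hypothesis makes $C_*(\widetilde X)\otimes_\ZZ R\to C_*(\widetilde Y)\otimes_\ZZ R$ a quasi-isomorphism between bounded-below complexes of free $R\pi$-modules, hence a chain homotopy equivalence that survives $-\otimes_{R\pi}N$. This is essentially how Broto, Levi, and Oliver argue in the cited reference as well, so you have reconstructed the intended proof.
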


\begin{proof}[Proof of~\ref{thm:universa_plus_construction_local_coeff}]
Let $p\colon \widetilde X\to X$ be a covering map corresponding to $H$, and let $p^{+R}\colon \widetilde X^{+R}\to X^{+R}_H$ be the plus construction of $p$ where $A$ is a universal $HS$-acyclic space. Let $E\to X^{+R}_H$ be the universal cover of $X^{+R}_H$. As $\widetilde X^{+R}$ is simply connected, the 
map $p^{+R}$ factors through $E$.
We can then enlarge the nullification diagram
\[
\begin{tikzcd}
\widetilde X \arrow[r] \arrow[d, "p"'] & \widetilde X^{+R} \arrow[r] \arrow[d, "p^{+R}"]        & E \arrow[d] \\
X \arrow[r, "q"]                  & X^{+R}_H \arrow[r, Rightarrow, no head] & X^{+R}_H
\end{tikzcd}
\]
where the left-hand side square is a homotopy pushout square, see Remark~\ref{rem:universal_plus_construction}. The homotopy fiber of the top map (the plus construction of $\widetilde X$) is $\Acy^S \widetilde X$, the acyclization functor for $HS$, see Remark~\ref{rem:acyclization}. We deduce then from \cite[Theorem~3.4(I)]{Chacholski1997} that the homotopy fiber $\operatorname{Fib}(q)$ is also $HS$-acyclic. Consider next the outside square:
\begin{equation*}\label{equ:local_coef_pullback}
\begin{tikzcd}
\widetilde X \arrow[r,"\widetilde q"] \arrow[d] & E \arrow[d] \\
X \arrow[r,"q"]                  & X^{+R}_H.
\end{tikzcd}
\end{equation*}
Both vertical maps are covering maps corresponding to free actions of the same group on the covering spaces. This outside square is therefore a homotopy pullback square (see for example \cite[Theorem~14.1.10]{MR2456045}) and horizontal homotopy fibers are thus weakly homotopy equivalent. All together we have proved that the homotopy fiber $\operatorname{Fib}(\widetilde q)$ is $HS$-acyclic, in particular, $\widetilde q$ is an $HS$-equivalence.
We conclude by Lemma~\ref{lem:auxiliar_local_coef} that $q$ induces an isomorphism in homology with all desired local coefficients. 
\end{proof}
This shows that our Definition~\ref{def:universal_plus_construction} of a relative plus construction for ordinary homology with coefficients in any ring verifies all desired properties from Broto-Levi-Oliver's Definition~\ref{def:BLO+const}.

%%%%%%%%%%%%%%%%%%%%%%%%%%%%%%%%%%%%%%%%%%%%%%%%%%%%%%%%%%%%%%%
%%%%%%%%%%%%%%%%%%%%%%%%%%%%%%%%%%%%%%%%%%%%%%%%%%%%%%%%%%%%%%%
%%%%%               relative plus                        %%%%%
%%%%%%%%%%%%%%%%%%%%%%%%%%%%%%%%%%%%%%%%%%%%%%%%%%%%%%%%%%%%%%%
%%%%%%%%%%%%%%%%%%%%%%%%%%%%%%%%%%%%%%%%%%%%%%%%%%%%%%%%%%%%%%%
\section{The universal property of the relative plus construction}
\label{sec:universal}
%%%%%%%%%%%%%%%%%%%%%%%%%%%%%%%%%%%%%%%%
%%%%%%%%%%%%%%%%%%%%%%%%%%%%%%%%%%%%%%%%
It is good to know that our nullification procedure in the category of arrows produces a relative plus construction in the sense of \cref{def:BLO+const}, but why is it better? Because it is a localization, it enjoys two universal properties. The map $p^{+h}\colon\widetilde X^{+h} \rightarrow X^{+h}_H$ is the initial map under the covering map $p\colon\widetilde X \rightarrow X$ among all $\id_A$-null maps where $A$ is a universal $h$-acyclic space. This is not so enlightening, let us thus focus on the second universal property, namely that of being terminal among all $\id_A$-local equivalences. 

%%%%%%%%%%%%%%%%%%%%%%%%%%%%%%%%%%%%%%%%%%%%%%
\begin{lem}
\label{lem:Ganea step}
%%%%%%%%%%%%%%%%%%%%%%%%%%%%%%%%%%%%%%%%%%%%%%
Let $h$ be a connective homology theory, $Z\rightarrow X$ be a map between connected spaces, and $X\rightarrow Y$ be an $h$-acyclic fibration between connected spaces with $n$-connected fiber $F$. Assume that the fiber inclusion $F\rightarrow X$ factors through $Z$. Then, the homotopy fiber $F_1$ of the induced map $X/F\to Y$ is $h$-acyclic and $(n+1)$-connected, and the fiber inclusion $F_1\rightarrow X/F$ factors through $Z/F$.
\end{lem}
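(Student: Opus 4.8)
The plan is to identify the new fibre $F_1$ by a Ganea-type argument, read off its connectivity and acyclicity from a join description, and only then address the factorization, which I expect to be the delicate point.

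For the identification I would invoke Ganea's theorem: coning off the fibre of $F \to X \to Y$ yields a map $X/F = X \cup_F CF \to Y$ whose homotopy fibre is the join $F * \Omega Y$. Granting this, connectivity is immediate from the estimate $\operatorname{conn}(A * B) \ge \operatorname{conn}(A) + \operatorname{conn}(B) + 2$: since $F$ is $n$-connected and $\Omega Y$ is non-empty, hence $(-1)$-connected, the join $F_1 \simeq F * \Omega Y$ is $(n+1)$-connected. For acyclicity I would use the equivalence $F * \Omega Y \simeq \Sigma(F \wedge \Omega Y)$. By Bousfield's \cref{thm:homology_decomposition_ring} it is enough to see that $F \wedge \Omega Y$ is $HS$-acyclic; as $F$ is $h$-acyclic, hence $HS$-acyclic, we have $\widetilde{H}_*(F; S) = 0$, and the Künneth theorem over $S$ (a field $\mathbb{Z}/p$ in the torsion case, or the principal ideal domain $\mathbb{Z}[I^{-1}]$) forces $\widetilde{H}_*(F \wedge \Omega Y; S) = 0$. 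Hence $F_1 \simeq \Sigma(F \wedge \Omega Y)$ is $HS$-acyclic, that is, $h$-acyclic.

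To bring $Z$ into the picture for the factorization, I would reorganize the identification. Writing the factorization of the fibre inclusion as $F \xrightarrow{a} Z \xrightarrow{b} X$ and comparing the cofibre sequences $F \to Z \xrightarrow{c} Z/F$ and $F \to X \xrightarrow{d} X/F$ exhibits $X/F$ as the homotopy pushout $X \cup_Z Z/F$; this pushout is compatible with the maps down to $Y$, so, by Mather's cube theorem (base change preserves homotopy pushouts), pulling it back along $y_0 \to Y$ produces a homotopy pushout of homotopy fibres $F_1 \simeq F \cup_G F_1'$, where $G = \operatorname{hofib}(Z \to Y)$ and $F_1' = \operatorname{hofib}(Z/F \to Y)$. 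Under this description the fibre inclusion $F_1 \to X/F$ is the pushout of the fibre inclusions $F \to X$, $F_1' \to Z/F$ and $G \to Z$. I would then lift it along $\bar b \colon Z/F \to X/F$ by sending the summand $F_1'$ through its own fibre inclusion $F_1' \to Z/F$ and the summand $F$ through the nullhomotopic composite $F \xrightarrow{a} Z \xrightarrow{c} Z/F$, whose image lands in the cone $CF \subseteq Z/F$ that $\bar b$ carries identically onto the cone in $X/F$.

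The hard part will be the coherence of this gluing over $G$. The two prescriptions agree on $G$ only if the fibre inclusion $G \to Z$ and the composite $G \to F \xrightarrow{a} Z$ — which do agree after $b$, because $ba$ is the fibre inclusion $F \to X$ — continue to agree after $c \colon Z \to Z/F$; equivalently, their difference, measured in $\operatorname{hofib}(b) = \operatorname{hofib}(Z \to X)$, must be annihilated once $F$ is coned off. This is precisely where I expect the hypothesis that $F \to X$ factors through $Z$ to be used essentially, in a homotopy-coherent form, together with the $n$-connectivity of $F$, which makes $c$ an $(n+1)$-connected map. Once this compatibility is secured, the universal property of the homotopy pushout yields the desired lift $F_1 \to Z/F$ of the fibre inclusion, and with it the factorization through $Z/F$.
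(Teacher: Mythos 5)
Your identification of $F_1$ with the join $F * \Omega Y$ via Ganea, the connectivity count, and the acyclicity argument (Bousfield reduction to $HS$ plus K\"unneth over $\mathbb{Z}/p$ or $\mathbb{Z}[I^{-1}]$) are all correct and agree with the paper's treatment of the first half of the lemma. The problem is the factorization, which is the actual content of the lemma, and there your proposal has a genuine, unresolved gap --- one you flag yourself. In your decomposition $F_1 \simeq F \cup_G F_1'$ (obtained from $X/F \simeq X \cup_Z Z/F$ and the cube theorem), defining a map to $Z/F$ piecewise requires the two prescriptions to agree coherently over $G$: you need the fiber inclusion $\iota_G \colon G \to Z$ and the composite $a \circ (G \to F)$ to become homotopic (with a chosen homotopy, compatibly with the cube) after composing with $c \colon Z \to Z/F$. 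The hypothesis only gives $b \circ \iota_G \simeq b \circ a \circ (G \to F)$, i.e., agreement after the further composition with $b \colon Z \to X$, and nothing forces agreement before applying $b$. Your two suggested remedies do not close this: canonical nullhomotopy of $c \circ a$ handles only one leg of the square, and the $(n+1)$-connectivity of $c$ is of no use because $G$ has unbounded dimension, so no obstruction-theoretic argument terminates. ``This is precisely where I expect the hypothesis to be used'' is the statement of the difficulty, not its resolution.

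The paper avoids this coherence problem entirely by using the hypothesis \emph{before} passing to fibers rather than after. It writes $F_1 \simeq F * \Omega Y$ as the homotopy pushout of $\Omega Y \leftarrow \Omega Y \times F \to F$ and maps this diagram to $* \leftarrow F \to Z$, the vertical maps being $\Omega Y \to *$, the projection, and the given factorization $a \colon F \to Z$; this diagram commutes outright, so it induces a map $F_1 \to Z/F$ with no gluing data to check. Composing with the diagram map $(* \leftarrow F \to Z) \Rightarrow (* \leftarrow F \to X)$ given by $b$ recovers, by naturality in Ganea's construction, exactly the fiber inclusion $F_1 \to X/F$, which is the desired factorization. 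If you want to salvage your route, you would have to prove the compatibility over $G$ --- essentially re-deriving this diagram-level argument --- so the honest fix is to construct the map $F_1 \to Z/F$ as a map of pushout diagrams from the start.
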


\begin{proof}
The setup described in the lemma can be summed up in the following diagram:
\[
\begin{tikzcd}
    F \ar[d, equal] \ar[r] & Z \ar[d] \ar[r] & Z/F \ar[d]& \\
    F \ar[r] & X \ar[r] & X/F \ar[r] & Y.
\end{tikzcd}
\]
We compute the homotopy fiber $F_1$ as the join $F * \Omega Y$ by Ganea's result \cite[Theorem~1.1]{MR179791}. Since this is weakly homotopy equivalent to $F \wedge \Sigma\Omega Y$ and $Y$ is connected, $F_1$ is again $h$-acyclic, and the connectivity is at least that of $F$ plus one. 

To prove the second statement, we define the map $F_1 \rightarrow Z/F$ as the homotopy pushout of a diagram of maps:
\[
\begin{tikzcd}
    \Omega Y \ar[d] & \Omega Y \times F \ar[l] \ar[r] \ar[d] & F \ar[d] \\
    * & F \ar[l] \ar[r] & Z.  
\end{tikzcd}
\]
The bottom row comes with a natural transformation of pushout diagrams to $* \leftarrow F \rightarrow X$ by assumption on the fiber inclusion for $F$. The composition of homotopy pushouts $F * \Omega Y \rightarrow Z/F \rightarrow X/F$ is the desired factorization.
\end{proof}

The previous lemma is the induction step in our proof of the universal property for a relative plus construction. The property we are looking for should be stated in terms of the relative plus construction $X \rightarrow X^{+R}_H$, but the universal property we have is given in the category of arrows. This explains why we have to work a little.

%%%%%%%%%%%%%%%%%%%%%%%%%%%%%%%%%%%%%%%%%%%%%%
\begin{thm}\label{thm:universal property}
%%%%%%%%%%%%%%%%%%%%%%%%%%%%%%%%%%%%%%%%%%%%%%
Let $h$ be a connective homology theory and $S$ be the associated $h$-character\-istic group. Let $X$ be a connected space and $H\unlhd \pi_1(X)$ an $S$-perfect subgroup of $\pi_1(X)$. The relative plus construction $q\colon X\to X^{+h}_H$ is terminal among all $h$-acyclic maps that kill a subgroup of $H$ on $\pi_1 (X)$.
\end{thm}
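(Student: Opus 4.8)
The plan is to establish existence and uniqueness of the comparison map separately, using the Ganea-type induction of \cref{lem:Ganea step} for existence and an acyclicity argument for uniqueness. Let $A$ be the chosen universal $h$-acyclic space, so that $\PP_A$ is the absolute plus construction, let $g\colon X\to Y$ be an $h$-acyclic map with $\pi_1(g)$ surjective of kernel $H'\unlhd H$ and connected homotopy fiber $F$, and let $q\colon X\to X^{+h}_H$ be the relative plus construction with homotopy fiber $Z=\operatorname{Fib}(q)$. The first thing I would record is that the universal cover $E$ of $X^{+h}_H$ is $A$-null: the proof of \cref{thm:universa_plus_construction_local_coeff} identifies $E$ with $\widetilde X^{+h}=\PP_A\widetilde X$, which is $A$-null by construction and simply connected since $H$ is $S$-perfect. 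The base case of the induction is then that $F\to X$ factors through $Z$: since $H'\subseteq H=\ker\pi_1(q)$, the composite $F\to X\xrightarrow{q}X^{+h}_H$ is trivial on $\pi_1$ and lifts to $E$; as $F$ is $h$-acyclic, hence $A$-acyclic, and $E$ is $A$-null, this lift is null-homotopic, so $F\to X\to X^{+h}_H$ is null and $F\to X$ lifts to $Z=\operatorname{Fib}(q)$. This is exactly the factorization hypothesis required by \cref{lem:Ganea step}.

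For existence I would run a double induction producing spaces $X^{(n)}$ with compatible maps $a_n\colon X^{(n)}\to Y$ and $b_n\colon X^{(n)}\to X^{+h}_H$, where $a_n$ is an $h$-acyclic fibration with $n$-connected fiber $F_n$ that factors through $\operatorname{Fib}(b_n)$. Starting from $X^{(0)}=X$, $a_0=g$, $b_0=q$, $F_0=F$, I set $X^{(n+1)}=X^{(n)}/F_n$. The map $a_n$ kills $F_n$ because $F_n$ is its fiber, and $b_n$ kills $F_n$ because $F_n\to X^{(n)}$ factors through $\operatorname{Fib}(b_n)$; hence both extend over the cofiber to $a_{n+1}$ and $b_{n+1}$. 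Applying \cref{lem:Ganea step} to $a_n$ (taking its ``$Z$'' to be $\operatorname{Fib}(b_n)$) shows $F_{n+1}$ is $h$-acyclic, $(n+1)$-connected, and factors through $\operatorname{Fib}(b_n)/F_n$; since $\operatorname{Fib}(b_n)\to X^{(n)}\to X^{+h}_H$ is null, the induced map $\operatorname{Fib}(b_n)/F_n\to X^{(n+1)}$ factors through $\operatorname{Fib}(b_{n+1})$, so $F_{n+1}$ does as well, closing the induction. Because the connectivity of $F_n$ tends to infinity, the maps $a_n$ exhibit $Y\simeq\hocolim_n X^{(n)}$, and the compatible family $\{b_n\}$ assembles to the desired $r\colon Y\to X^{+h}_H$ with $r\circ g\simeq q=b_0$.

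For uniqueness I would analyze the space of factorizations as the homotopy limit over $n$ of the mapping spaces $\map(X^{(n)},X^{+h}_H)$ restricting to $q$ on $X=X^{(0)}$. Passing from $X^{(n)}$ to the cofiber $X^{(n+1)}=X^{(n)}/F_n$, the space of extensions of a given $b_n$ is controlled by the pointed mapping spaces $\map_*(\Sigma^{k}F_n,X^{+h}_H)$. As each $\Sigma^k F_n$ is simply connected it maps through $E$, and $\map_*(\Sigma^k F_n,E)\simeq\map_*(\PP_A\Sigma^k F_n,E)\simeq *$ because $\Sigma^k F_n$ is $A$-acyclic and $E$ is $A$-null. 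Thus each stage contributes a contractible space of extensions, the homotopy limit is contractible, and $r$ is unique up to homotopy, which upgrades the conclusion to full terminality in the category of $h$-acyclic maps under $X$ that kill a subgroup of $H$.

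The main obstacle I anticipate is the bookkeeping of the two interacting fibrations in the induction: one must simultaneously track the Ganea increase in connectivity of the fibers $F_n$ of $a_n$ and the existence of the compatible maps $b_n$, and in particular reproduce at each stage the factorization of $F_{n+1}$ through $\operatorname{Fib}(b_{n+1})$ that feeds \cref{lem:Ganea step} on the next step. The conceptual crux, by contrast, is the single nullity input, namely that an $h$-acyclic space admits only null pointed maps into the $A$-null universal cover $E$; this is what converts the purely group-theoretic inclusion $H'\subseteq H$ into the geometric factorizations driving both the existence of $r$ and its uniqueness.
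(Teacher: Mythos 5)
Your proof has the right skeleton (the Ganea-type induction of \cref{lem:Ganea step}, the identification $Y\simeq\hocolim_n X^{(n)}$ by growing connectivity), but it rests entirely on one claim that is both unjustified and false in general: that the universal cover $E$ of $X^{+h}_H$ is $A$-null, via the purported identification $E\simeq\widetilde X^{+h}=\PP_A\widetilde X$. The proof of \cref{thm:universa_plus_construction_local_coeff} does \emph{not} establish this identification: it only lifts $p^{+h}$ to a map $\widetilde X^{+h}\to E$ and shows that $\widetilde X\to E$ is an $HS$-equivalence, and an $HS$-equivalence between simply connected spaces need not be a weak equivalence for general $S$. Here is a concrete counterexample. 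Take $h=H\ZZ/p$ (so $S=\ZZ/p$), $X=B\bigl(\ZZ[1/p]\times\ZZ/2\bigr)$ and $H=\ZZ[1/p]$, which is $S$-perfect. Then $\widetilde X\simeq B\ZZ[1/p]$ is itself $H\ZZ/p$-acyclic, so $\widetilde X^{+h}\simeq\ast$ and $X^{+h}_H\simeq X/\widetilde X$; pulling the defining homotopy pushout back along the double cover of $X^{+h}_H$ (Mather's cube theorem over $B\ZZ/2$) identifies $E$ with the unreduced suspension of $B\ZZ[1/p]$, that is, the Moore space $M(\ZZ[1/p],2)$. This space is $H\ZZ/p$-acyclic and non-contractible, hence it cannot be $A$-null (an $A$-null acyclic space satisfies $E=\PP_A E\simeq\ast$), and it is not equivalent to $\widetilde X^{+h}\simeq \ast$. (The statement you want \emph{is} true for integral homology, where simply connected spaces are null with respect to all integrally acyclic spaces; that special case is likely the source of the intuition, but the theorem concerns arbitrary connective $h$.) Since your base case (nullity of the lift $F\to E$), your inductive step (nullity of $\operatorname{Fib}(b_n)/F_n\to X^{+h}_H$, which via the coexact sequence reduces to nullity of maps $\Sigma F_n\to E$), and your uniqueness argument ($\map_*(\Sigma^kF_n,E)\simeq\ast$) all invoke exactly this property of $E$, the gap propagates through the entire proof.

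The repair is to change where the factorizations live, which is what the paper does. The fibers should be factored through the covering spaces, not through homotopy fibers of maps to $X^{+h}_H$: the image of $\pi_1(F_0)\to\pi_1(X)$ is the kernel killed by $g$, hence contained in $H$, so $F_0\to X$ lifts through $\widetilde X\to X$ by elementary covering space theory, and \cref{lem:Ganea step} applied with $Z=\widetilde X$ propagates factorizations $F_n\to\widetilde X_n=\widetilde X_{n-1}/F_{n-1}$. The only nullity input then needed is that $\widetilde X^{+h}=\PP_A\widetilde X$ is $A$-null, which holds by construction; it gives $\map_*(F_n,\widetilde X^{+h})\simeq\ast$, hence $\map(\widetilde X_n,\widetilde X^{+h})\simeq\map(\widetilde X_{n-1},\widetilde X^{+h})$. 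The extension of maps to $X^{+h}_H$ over the tower is then obtained not by obstruction theory against $E$ but in the arrow category: since the squares relating $p_{n-1}$ to $p_n$ are homotopy pushouts, the mapping-space squares into $X^{+h}_H$ are homotopy pullbacks, and combined with the equivalence on the covering side one gets that $\map(p_n,p^{+h})\to\map(p_{n-1},p^{+h})$ is a weak equivalence, which yields both existence and essential uniqueness of the compatible maps. Note that your base-case \emph{conclusion} is salvageable by this route (the composite $F\to X\to X^{+h}_H$ equals $F\to\widetilde X\to\widetilde X^{+h}\to X^{+h}_H$, and the middle map is null), but the induction and the uniqueness genuinely require carrying the covering spaces $\widetilde X_n$, i.e.\ the arrow-category data, all the way through.
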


\begin{proof}
Let $F_0$ be the fiber of any $h$-acyclic fibration $f_0\colon X \rightarrow Y$. Without loss of generality, we can assume that the subgroup $K$ that $f_0$ kills in $\pi_1(X)$ is $H$ (if not, we obtain a map from $Y$ to $Y^{+h}_{H/K}$ and work with the latter). We apply \cref{lem:Ganea step} to the $h$-acyclic fibration $f_0$ and the covering map $p\colon \widetilde X \rightarrow X$ corresponding to $H$. Therefore we know that $f_0$ factors through a new acyclic fibration $f_1\colon X_1=X/F \rightarrow Y$ and the fiber inclusion factors through $F_1 \rightarrow \widetilde X/F= \widetilde X_1$.

We then iterate this procedure and obtain a sequence of maps $X \rightarrow X_{n-1} \rightarrow X_{n} \xrightarrow{f_{n}} Y$ for $n \geq 1$. By \cref{lem:Ganea step} the homotopy fiber $F_n$ of $f_n$ is $h$-acyclic, in particular connected. The connectivity of the $F_n$'s increases then strictly at each step, so that the homotopy colimit $X_\infty$ of the $X_n$'s is weakly homotopy equivalent to $Y$.

We are now ready to establish the universal property. For this, we have to construct a map $Y \rightarrow X^{+h}_H$, and the previous discussion shows that equivalently we can construct compatible maps out of the $X_n$'s. The induction starts with the map $X=X_0 \rightarrow X^{+h}_H$ which is given as the level $1$ part of a map of arrows from $\widetilde X \rightarrow X$ to $\widetilde X^{+h} \rightarrow X^{+h}_H$. The inductive step is best seen in a diagram:
%%%%%%%%%%%%%%%%%%%%%%%%%%%%%%%%%%%%%%%%%%%%%%
\begin{equation}\label{equ:prf_universal_property}
%%%%%%%%%%%%%%%%%%%%%%%%%%%%%%%%%%%%%%%%%%%%%%
\begin{tikzcd}
    F_{n-1} \ar[d, equal] \ar[r] & \widetilde X_{n-1} \ar[d, "p_{n-1}"] \ar[r] & \widetilde X_{n-1}/F_{n-1} \ar[d, "p_n"] \ar[rr, dashed] && \widetilde X^{+h} \ar[d, "p^{+h}"]\\
    F_{n-1} \ar[r] & X_{n-1} \ar[r] & X_{n-1}/F_{n-1} \ar[r] \arrow[rr, dashed, bend right]& Y \ar[r, dotted] & X^{+h}_H.
\end{tikzcd}
\end{equation}
The top dashed arrow exists because $F_{n-1}$ is $h$-acyclic, but to get a compatible pair of dashed arrows, we work in the category of arrows and prove that the relative plus construction $p^{+h}$ is local with respect to the map of arrows $p_{n-1} \rightarrow p_{n}$. Let us recall that $\map (p_{n}, p^{+h})$ and its analog for $p_{n-1}$ are obtained as the homotopy pullbacks of the top and bottom row respectively in
\[
\begin{tikzcd}
    \map(\widetilde X_n, \widetilde X^{+h}) \ar[r] \ar[d] & \map(\widetilde X_n, X^{+h}_H) \ar[d] & \map(X_n, X^{+h}_H) \ar[d] \ar[l] \\
    \map(\widetilde X_{n-1}, \widetilde X^{+h}) \ar[r] & \map(\widetilde X_{n-1}, X^{+h}_H) & \map(X_{n-1}, X^{+h}_H). \ar[l]
\end{tikzcd}
\]
The left hand side vertical map is a weak homotopy equivalence as observed above since $\widetilde X_{n-1} \rightarrow \widetilde X_{n}$ is an $h$-acyclic map and $\widetilde X^{+h}$ is $A$-null. The right hand side square is a homotopy pullback, it has been obtained by mapping a homotopy pushout square into $X^{+h}_H$. Therefore, the induced map $\map (p_{n}, p^{+h}) \rightarrow \map (p_{n-1}, p^{+h})$ is a weak homotopy equivalence.

We conclude then that the pair of dashed arrows in Diagram~(\ref{equ:prf_universal_property}) exist indeed, for any $n$, providing a map $X_\infty \rightarrow X^{+h}_H$ on the homotopy colimit, which is weakly homotopy equivalent to $Y$. This yields the dotted map we need.
\end{proof}

% bib stuff
\nocite{*}
%%Bibliografia
\bibliographystyle{alpha}

\end{document}